\DeclareFontFamily{U}{mathx}{\hyphenchar\font45}
\DeclareFontShape{U}{mathx}{m}{n}{
      <5> <6> <7> <8> <9> <10>
      <10.95> <12> <14.4> <17.28> <20.74> <24.88>
      mathx10
      }{}
\DeclareSymbolFont{mathx}{U}{mathx}{m}{n}
\DeclareMathAccent{\widecheck}{0}{mathx}{"71}
\DeclareMathAccent{\wideparen}{0}{mathx}{"75}
\newtheorem{theorem}{Theorem}[section]
\newtheorem{proposition}[theorem]{Proposition}
\newtheorem{lemma}[theorem]{Lemma}
\newtheorem{assumption}[theorem]{Assumption}
\let\Im\relax
\DeclareMathOperator*{\Im}{Im}
\newcommand{\lela}{\left \langle}
  \newcommand{\rira}{\right \rangle}
\newcommand{\R}{\mathbb R}
\newcommand{\N}{\mathbb N}
\newcommand{\Z}{\mathbb Z}
\newcommand{\C}{\mathbb C}
\newcommand{\cF}{\mathcal F}
\newcommand{\japx}{\langle x \rangle}
\begin{document}

\title{Finite-size effects in response functions of molecular systems}
\author{Mi-Song Dupuy$^{*}$, Antoine Levitt$^{\dagger}$}
\maketitle
\begin{abstract}
  We consider an electron in a localized potential submitted to a weak
  external, time-dependent field. In the linear response regime, the
  response function can be computed using Kubo's formula. In this
  paper, we consider the numerical approximation of the response
  function by means of a truncation to a finite region of space. This
  is necessarily a singular approximation because of the discreteness
  of the spectrum of the truncated Hamiltonian, and in practice a
  regularization (smoothing) has to be used. Our results provide error
  estimates for the response function past the ionization threshold
  with respect to both the smoothing parameter and the size of the
  computational domain.
\end{abstract}
\tableofcontents

\section{Introduction}
Consider a molecule in its electronic ground state, to which an
external time-dependent electric field is applied. The resulting
change in the electronic density can be computed using linear response
theory, resulting in a quantity $\widehat K(\omega)$ describing the
response at frequency $\omega$. To compute it in practice, the domain
of computation has to be truncated to a region of size $L$, yielding
an approximate response function $\widehat K_{L}(\omega)$. Since the
dynamics on the full space and on a finite region of space are
qualitatively different, $\widehat K_{L}$ is qualitatively different
from $\widehat K$: in particular, even when $\widehat K$ is a regular
function, $\widehat K_{L}$ is always a singular distribution,
reflecting the discreteness of the spectrum of the Hamiltonian. This
paper answers, in a simplistic one-electron model, the following
question: in which sense does $\widehat K_{L}$ converge to
$\widehat K$, and with what convergence rate?

We focus here for technical convenience on the simplest
continuous model of a single-electron system in a potential $V$,
described by the rest Hamiltonian
\begin{align}
  H = - \Delta + V,
\end{align}
with $V$ decaying at infinity in a sense to be made precise. The
Hamiltonian $H$ is self-adjoint on $L^{2}(\R^{d})$, with possible
negative eigenvalues and continuous spectrum $[0, \infty)$. Assume
that there is a simple lowest eigenvalue $E_{0} < 0$, with associated
eigenfunction $\psi_{0}$. We consider the time-dependent Schrödinger
equation
\begin{align}
  i \partial_{t} \psi = H\psi + \varepsilon f(t) V_{\mathcal P} \psi, \quad \psi(0) = \psi_{0}
\end{align}
where $V_{\mathcal P}$ is a perturbing potential, $f$ a continuous causal function (\emph{i.e.} $f(t) = 0$ for $t<0$), and
$\varepsilon > 0$ is a small parameter. If $V_{\mathcal O}$ is a potential
representing an observable, to first order in $\varepsilon$, we have
for all $t \in \R$
\begin{align}
  \langle \psi(t), V_{\mathcal O} \psi(t) \rangle = \langle \psi_{0}, V_{\mathcal O} \psi_{0} \rangle + \varepsilon (K \ast f)(t) + O(\varepsilon^{2}).
\end{align}
The function $K$ is the response function, computed in Theorem
\ref{thm:Kubo}. For instance, when $V_{\mathcal P} = -x_{\beta}$ and
$V_{\mathcal O} = x_{\alpha}$, then $K(t)$ is the polarizability impulse response:
the dipole response at time $t$ in the direction $\alpha$ of the system to an
impulse uniform field at time $0$ in the direction $\beta$.

$K$ is a continuous causal function of at most polynomial growth, and
has a distributional Fourier transform
\begin{align}
  \label{eq:Komega}
  \widehat K(\omega) &= \lim_{\eta \to 0^{+}} \lela \psi_{0}, V_{\mathcal O} \Big(\omega +i\eta - (H-E_0)\Big)^{-1} V_{\mathcal P} \psi_{0}\rira - \lela \psi_{0},V_{\mathcal P} \Big(\omega +i\eta + (H-E_0)\Big)^{-1} V_{\mathcal O} \psi_{0}\rira,
\end{align}
where the limit is taken in the sense of distributions, and $\eta \to
0^{+}$ means the one-sided limit as $\eta$ converges to zero by positive values. This quantity
contains the frequency information of the response, a valuable
physical output. Using a spectral resolution of
$H = \int_{\R} \lambda \,\mathrm{d}P(\lambda),$ where $\mathrm{d}P(\lambda)$ is a
projection-valued measure, one can formally rewrite it as
\begin{align*}
  \widehat K(\omega) = \lim_{\eta \to 0^{+}} \int_{\R} \frac{\langle  V_{0} \psi_{0}, \mathrm{d}P(\lambda) V_{\mathcal P} \psi_{0} \rangle}{\omega + i\eta - (\lambda-E_{0})} - \frac{\langle  V_{\mathcal P} \psi_{0}, \mathrm{d}P(\lambda) V_{\mathcal O} \psi_{0} \rangle}{\omega + i\eta + (\lambda-E_{0})}.
\end{align*}
The distributional limit (sometimes called
Plemelj-Sokhotski formula)
\begin{align}
  \label{eq:plemelj}
  \lim_{\eta \to 0^{+}} \frac 1 {x+i\eta} = \lim_{\eta \to 0^{+}} \frac{x}{x^{2} + \eta^{^{2}}} - i \frac{\eta}{x^{2}+\eta^{2}} = {\rm p.v.} \frac 1 x - i \pi \delta_{0},
\end{align}
where ${\rm p.v.}$ stands for the Cauchy principal value shows
that $\widehat K$ is a singular distribution at the excitation
energies $\omega = \pm (E_{n} - E_{0})$, where $E_{n}$ are the
eigenvalues of $H$ other than $E_{0}$. Past $\lambda > 0$ however, the
spectrum of $H$ is continuous, and therefore for $|\omega| > -E_{0}$,
the nature of $\widehat K$ depends on that of
$\langle V_{\mathcal O} \psi_{0}, \mathrm{d}P(|\omega| - E_{0}) V_{\mathcal P} \psi_{0} \rangle$.
Under certain conditions, one can prove that this quantity is regular:
this is one avatar of a limiting absorption
principle. Such principles have a long history in mathematical
physics, and are a first step towards scattering theory
\cite{agmon1975spectral,reed1978iii}. Physically, this corresponds to
ionization: the electron, under the action of the forcing field,
dissolves into the continuum and goes away to infinity.

Consider now a box $[-L,L]^{d}$ with Dirichlet boundary conditions,
giving rise to a (semi-)discretized operator $H_{L}$. In practice,
this box is further discretized onto a grid for instance; however the
convergence as a function of the grid size is a different, more
standard problem, which we do not consider in this paper. From $H_{L}$
we can define a response function $K_{L}$ and its Fourier transform $\widehat K_{L}$, similarly to the
definition of $K$ and $\widehat K$ in \eqref{eq:Komega}. Note that $H_{L}$ has
compact resolvent and a discrete set of eigenvalues, tending to
infinity. Therefore $\widehat K_{L}$ is a singular distribution,
reflecting the fact that complete ionization is not possible in a
finite system. A smooth function can be obtained by computing
$\widehat K_{L}(\omega+i\eta)$ at finite $\eta$, which blurs the
discrete energy levels into a continuum, and physically corresponds to
adding an artificial dissipation. This however results in a distortion
of the true response function. In physically relevant
three-dimensional computations, for instance using time-dependent
density functional theory, obtaining converged spectra requires a
manual selection of an appropriate $\eta$ parameter. Furthermore, only
moderate values of $L$ can realistically be taken, and convergence is
often slow and unpredictable \cite{d2019locality}. The main
contribution of our paper is to clarify in which sense $\widehat K_{L}$
converges to $\widehat K$, and to quantify sources of error due to
finite $\eta$ and $L$.

The mathematical and numerical analysis of ground state properties of
molecular systems is by now relatively well established. At finite
volume the convergence of a number of numerical methods for various
mean-field models has been established \cite{cances2012numerical}.
Finite-size effects have been studied mathematically in periodic
systems \cite{gontier2017supercell,cances2020numerical}. However,
although a number of authors have focused on establishing the validity
of linear response theory
\cite{bouclet2005linear,bachmann2018adiabatic,teufel2020non,cances2020coherent},
and studying its properties
\cite{cances2012mathematical,prodan2013quantum}, work on the numerical
analysis of response quantites remains fairly scarce. In
particular, we believe our work to be the first to address rigorously
the important question of ionization in this context.

\section{Notations and assumptions}
We work in $d$ space dimensions. Following conventions usual in
quantum mechanics, we use
\begin{align*}
  \widehat f(\omega) = \int_{\R} e^{i\omega t} f(t) dt, \quad (\mathcal F f)(q) = \int_{\R^{d}} e^{-i q \cdot x} f(x) dx
\end{align*}
for the Fourier
transforms in time and space respectively. The unusual sign in the time Fourier
transform is done so that the elementary solution $e^{-i E t}$ to
the Schrödinger equation has a Fourier transform localized on $\{E\}$.

For $k \in \mathbb{N}, 0 \leq \alpha \leq 1$, we will note
$C^{k,\alpha}$ the space of $k$ times differentiable functions with a
Hölder $\alpha$ continuous $k$-th derivative. We denote by
$L^{2}(\R^{d})$ the Lebesgue space, by $H^{k}(\R^{d})$ the Sobolev
space, by $\mathcal S(\R^{d})$ the space of Schwartz functions and by
$\mathcal S'(\R^{d})$ the space of tempered distributions. When left
unspecified, $\|\cdot\|$ refers to the $L^{2}(\R^{d})$ norm. For a
weight function $w : \R^{d} \to (0,\infty)$, we denote by
\begin{align*}
  L^{2}(w) &= \left\{\psi, \,\psi w \in L^{2}(\R^{d})\right\}\\
  H^{k}(w) &= \left\{\psi, \,\psi w \in H^{k}(\R^{d})\right\}
\end{align*}
the weighted spaces, with naturally associated Hilbert space
structure. We use the Japanese bracket convention
$\japx = \sqrt{1 + |x|^{2}}$ for the regularized norm. Spaces of particular interest are
$L^{2}(\japx^{n})$, the space of polynomially decaying functions of
exponent $n$, and $L^{2}(e^{\alpha \japx})$ and exponentially decaying
functions with rate $\alpha$. We will use in proofs only the notation
$a \lesssim b$ to mean that there exists $C > 0$ such that
$a \le C b$, where the dependence of $C$ on other quantities is made
clear in the statement to be proved.

\medskip

We first assume a strong regularity on $V$.
\begin{assumption}[Smoothness of $V$]
  \label{assump:V_differentiability}
  The potential $V: \R^{d} \to \R$ is smooth with bounded derivatives.
\end{assumption}
This strong assumption is only required to establish the existence of
a propagator in Theorem \ref{thm:Kubo}, using the results of
\cite{fujiwara1979construction}, and of the linear response function $\widehat{K}$ given by the Kubo formula (Equation~\eqref{eq:K_frequency}). 
It could significantly be relaxed for the other results in this paper, as our focus is on the properties of $\widehat{K}$, which are related to the behavior at infinity of the potential.  

%but as the emphasis of this paper is
%on the behavior at infinity rather than locally, we do not push this point
%further.

\medskip

More important are the decay properties of $V$.
\begin{assumption}[Decay of $V$]
    \label{assump:decay_V}
  There is $\varepsilon > 0$ such that $|x|^{2+\varepsilon} V(x)$ is bounded.
\end{assumption}
This assumption is to establish the differentiability of the resolvent
on the boundary; see remarks after our main result for possible
extensions to potentials decaying less quickly.

Under these two assumptions, as is standard, $H$ has domain
$H^{2}(\R^{d})$, and continuous spectrum $[0, \infty)$; in particular,
there are no embedded eigenvalues in $[0,\infty)$ \cite[Theorem
XIII.58]{reed1978iv}.

\medskip

\begin{assumption}[Non-degenerate ground state]
  There is at least one negative eigenvalue. The lowest eigenvalue
  $E_{0}$ is simple. We denote by $\psi_{0}$ the unique (up to sign)
  associated eigenfunction.
\end{assumption}
We establish our results for the ground state for concreteness, but
this is not crucial: the same results would be valid for any simple
eigenvalue.

\medskip

\begin{assumption}[Observable and perturbation]
  \label{assump:observables_and_perturb}
  The observable $V_{\mathcal O} : \R^{d} \to \R$ and perturbation $V_{\mathcal P} : \R^{d}
  \to \R$ are infinitely
  differentiable and sub-linear: for all $|\alpha| \ge 1$,
  $\partial^{\alpha} V_{\mathcal O}$ and $\partial^{\alpha} V_{\mathcal P}$ are bounded.
\end{assumption}
In particular this allows the potentials $x_{i}$, in which case the
response functions are the dynamical polarizabilities. Again this is
to establish the existence of a propagator in Theorem \ref{thm:Kubo}.
Our results from then on only require potentials growing at most
polynomially, and could also be extended to accomodate more general
operators (such as the current operator).

\section{Main results}
\subsection{Kubo's formula}
We first give Kubo's formula in our context and define the response
function $K$.
\begin{theorem}[Kubo]
  \label{thm:Kubo}
  For all continuous and causal functions
  $f : \R \to \R$ uniformly bounded by $1$, for all
  $0 < \varepsilon < 1$, the Schrödinger equation
  \begin{align*}
    i\partial_{t} \psi = H \psi + \varepsilon f(t) V_{\mathcal P} \psi , \quad \psi(0) = \psi_{0}
  \end{align*}
  has a unique strong solution for all times. Furthermore,
  \begin{align}
    \label{eq:kubo_formula}
    \langle  \psi(t), V_{\mathcal O} \psi(t) \rangle = \lela \psi_{0}, V_{\mathcal O}  \psi_{0}\rira + \varepsilon (K \ast f)(t) + R_{\varepsilon}(t)
  \end{align}
  with
  \begin{align*}
    |R_{\varepsilon}(t)| \le C \varepsilon^{2} (1+|t|^{8})
  \end{align*}
  for some $C > 0$ independent of $t, \varepsilon$. The response
  function $K$ is defined by
  \begin{align}
    \label{eq:kubo_time}
    K(\tau) &= -i \theta(\tau) \lela V_{\mathcal O} \psi_{0},  e^{-i (H-E_{0}) \tau} V_{\mathcal P} \psi_{0}\rira + {\rm c.c.},
  \end{align}
  where $z + {\rm c.c.}$ is a notation for $z + \overline{z}$, and
  $\theta$ is the Heaviside function.
  It is continuous, of at most polynomial growth, and causal.
\end{theorem}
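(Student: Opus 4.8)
The plan is to establish the three assertions in turn, treating the remainder bound as the only genuinely delicate point.

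\emph{Existence and uniqueness.} The time-dependent generator is $H(t) = H + \varepsilon f(t) V_{\mathcal P}$, a perturbation of $H$ by the real, smooth, at most linearly growing multiplication operator $\varepsilon f(t) V_{\mathcal P}$, with $t \mapsto f(t)$ continuous. By Assumptions \ref{assump:V_differentiability} and \ref{assump:observables_and_perturb} the total potential $V + \varepsilon f(t) V_{\mathcal P}$ is smooth, sub-quadratic, and has all derivatives of order $\ge 1$ bounded uniformly in $t$, which places us exactly within the hypotheses of Fujiwara's construction \cite{fujiwara1979construction}. I would quote it to obtain a strongly continuous unitary propagator $U(t,s)$ preserving $H^2(\R^d)$, so that $\psi(t) = U(t,0)\psi_0$ is the unique strong solution.

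\emph{The expansion.} I would write the Duhamel identity $\psi(t) = e^{-iHt}\psi_0 - i\varepsilon \int_0^t e^{-iH(t-s)} f(s) V_{\mathcal P} \psi(s)\, ds$ and iterate once, decomposing $\psi = \psi^{(0)} + \varepsilon \psi^{(1)} + r$, where $\psi^{(0)}(t) = e^{-iHt}\psi_0 = e^{-iE_0 t}\psi_0$, $\psi^{(1)}(t) = -i\int_0^t e^{-iH(t-s)} f(s) V_{\mathcal P} e^{-iE_0 s}\psi_0\, ds$, and $r$ is the second-order double integral, carrying two factors of $\varepsilon f V_{\mathcal P}$. Expanding $\langle \psi(t), V_{\mathcal O}\psi(t)\rangle$ gives the constant term $\langle \psi_0, V_{\mathcal O}\psi_0\rangle$ at zeroth order and $\varepsilon(\langle \psi^{(0)}, V_{\mathcal O}\psi^{(1)}\rangle + \text{c.c.})$ at first order. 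Substituting the formulas, the phases recombine into $e^{-i(H-E_0)(t-s)}$, and since $V_{\mathcal O}$ is real I obtain $\langle \psi^{(0)}, V_{\mathcal O}\psi^{(1)}\rangle = \int_0^t K_1(t-s) f(s)\, ds$ with $K_1(\tau) = -i\theta(\tau)\langle V_{\mathcal O}\psi_0, e^{-i(H-E_0)\tau} V_{\mathcal P}\psi_0\rangle$. Causality of $f$ turns this into the full convolution $(K_1 \ast f)(t)$, and adding the complex conjugate (again using that $f$ is real) reproduces $K = K_1 + \text{c.c.}$ of \eqref{eq:kubo_time}, so the order-$\varepsilon$ term is precisely $\varepsilon (K\ast f)(t)$.

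\emph{The remainder.} $R_\varepsilon(t)$ collects $\varepsilon^2 \langle \psi^{(1)}, V_{\mathcal O}\psi^{(1)}\rangle$, the cross terms with $r$, and $\langle r, V_{\mathcal O} r\rangle$; since $r = O(\varepsilon^2)$ every one of these carries at least $\varepsilon^2$, so $R_\varepsilon = O(\varepsilon^2)$. The hard part is the polynomial-in-$t$ bound. Because $V_{\mathcal P}$ and $V_{\mathcal O}$ grow like $\japx$, estimating these inner products requires controlling the weighted norms $\|\japx^k U(t,s)\chi\|$ of evolved, spatially localized states. The crucial input is a propagation-of-moments (wavepacket-spreading) estimate: differentiating $\|\japx^k \psi(t)\|^2$, the multiplication parts of the generator drop out and only the commutator $[\japx^k, -\Delta]$ survives, a lower-order operator coupling moments to momenta; a Grönwall argument then yields at most polynomial growth in $|t-s|$, roughly one power of time per power of $\japx$. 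Here the smoothness and sublinearity hypotheses on $V, V_{\mathcal P}, V_{\mathcal O}$, together with the exponential localization of $\psi_0$ (Agmon/Combes--Thomas, ensuring $\japx^k V_{\mathcal P}\psi_0, \japx^k V_{\mathcal O}\psi_0 \in L^2$ for all $k$), are what make the bound close. Careful bookkeeping of the two time integrations in $r$ and of the moment factors from $V_{\mathcal P}, V_{\mathcal O}$ produces the stated exponent, giving $|R_\varepsilon(t)| \le C\varepsilon^2 (1+|t|^8)$; I expect this exponent to be non-optimal and this moment-propagation step for the full propagator to be the principal obstacle.

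\emph{Properties of $K$.} Causality is immediate from $\theta(\tau)$. Boundedness, hence a fortiori at most polynomial growth, follows from Cauchy--Schwarz and unitarity of $e^{-i(H-E_0)\tau}$, giving $|K(\tau)| \le 2\|V_{\mathcal O}\psi_0\|\,\|V_{\mathcal P}\psi_0\| < \infty$. Continuity on $\tau > 0$ is the strong continuity of the unitary group; at $\tau = 0$ the ``c.c.'' structure makes $K$ continuous because $\langle V_{\mathcal O}\psi_0, V_{\mathcal P}\psi_0\rangle$ is real (all factors may be taken real), so $K(0^+) = 0$, matching $K \equiv 0$ on $\tau < 0$.
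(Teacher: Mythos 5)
Your proposal is correct and follows essentially the same route as the paper: Fujiwara's construction for the propagator, a first-order Duhamel/Dyson expansion with the exact second-order remainder, identification of the first-order term as $(K\ast f)(t)$, and a commutator-method control of moments of the evolved state to get the polynomial-in-time bound on $R_\varepsilon$ (the paper runs the commutator method through iterated Duhamel formulas rather than your Gr\"onwall differential inequalities, but this is the same mechanism). Your explicit verification of continuity, boundedness, and causality of $K$ is a fine complement, as the paper leaves those properties largely implicit.
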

The proof of this theorem is given in Section \ref{sec:Kubo}. The
expression for $K$ results from a Dyson expansion, and the bound on
$R_{\varepsilon}(t)$ from a control of the growth of moments of
$\psi(t)$ using the commutator method.

\medskip

Since $K$ is causal and of at most polynomial growth, one can define
its Fourier transform in two different senses: as a tempered
distribution $\widehat K(\omega)$ on the real line (defined by duality
against Schwartz functions), and as a holomorphic function
$\widehat K(z)$ on the open upper-half complex plane (defined by the
convergent integral
$\int_{0}^{+\infty} K(\tau) e^{iz} \mathrm{d} \tau$). Since
$K(\tau) e^{-\eta \tau}$ converges towards $K$ in the sense of
tempered distributions, both these definitions agree in the sense that
\begin{align*}
  \widehat K(\omega) = \lim_{\eta \to 0^{+}} \widehat K(\omega+i\eta)
\end{align*}
in the sense of tempered distributions. 

Using for $\eta > 0$
\begin{align*}
  \int_{0}^{+\infty} e^{i (\omega+i\eta-\lambda) \tau} \mathrm{d}\tau = \frac{i}{\omega+i\eta-\lambda}
\end{align*}
and functional calculus, it follows that
\begin{align}
  \label{eq:K_frequency}
  \widehat K(\omega) &= \lim_{\eta \to 0^{+}} \lela V_{\mathcal O} \psi_{0}, \Big(\omega +i\eta - (H-E_0)\Big)^{-1} V_{\mathcal P} \psi_{0}\rira - \lela V_{\mathcal P} \psi_{0}, \Big(\omega +i\eta + (H-E_0)\Big)^{-1} V_{\mathcal O} \psi_{0}\rira
\end{align}
in the sense of tempered distributions.

\subsection{The limiting absorption principle}
When $|\omega| \notin \sigma(H) - E_{0}$, $\widehat K(\omega)$ defines
an analytic function in a neighborhood of $\omega$. When $\omega = E_{n} - E_{0}$
for $E_{n}$ an eigenvalue of $H$, $\lim_{\eta \to 0^{+}} \widehat
K(\omega)$ diverges, and the distribution $\widehat K$ is singular at
$\omega$. When $|\omega| > -E_{0}$, \emph{i.e.} above the ionization
threshold, we have the following result.
\begin{theorem}
  \label{thm:main1}
  The tempered distribution $\widehat K$ is a continuously differentiable
  function for $|\omega| > -E_{0}$. Furthermore, for all such $\omega$
  there is $C > 0$ such that for all $0 < \eta < 1$,
  \begin{align}
    \label{eq:main_1}
    |\widehat K(\omega+i\eta) - \widehat K(\omega)| &\le C \eta.
  \end{align}
\end{theorem}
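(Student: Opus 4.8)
The plan is to localize the analysis to the single singular resolvent and then to reduce the whole statement to a $C^1$ regularity, up to the real axis, of a sandwiched resolvent. Fix $\omega$ with $|\omega| > -E_0$; it suffices to treat $\omega > -E_0 > 0$, the case $\omega < E_0$ being symmetric (there the roles of the two terms are simply exchanged). Writing $\widehat K(\omega + i\eta) = F_1(\omega+i\eta) - F_2(\omega + i\eta)$ for the two terms in \eqref{eq:K_frequency}, I first observe that the second term is harmless: since $H - E_0 \ge 0$, the spectral denominator of $F_2$ has real part $\omega + (\lambda - E_0) \ge \omega > 0$, so $z \mapsto (z + (H - E_0))^{-1}$ is analytic in a full complex neighbourhood of $\omega$ and $F_2$ is smooth there, contributing $O(\eta)$ trivially. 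Everything therefore reduces to the first term $F_1(z) = G(z + E_0)$, where $G(\zeta) = \lela f, (\zeta - H)^{-1} g\rira$ with $f = V_{\mathcal O}\psi_0$, $g = V_{\mathcal P}\psi_0$, and where $\zeta$ lives near $\lambda_0 := \omega + E_0 > 0$, a point in the interior of the continuous spectrum $[0,\infty)$.

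The key claim, from which the theorem follows, is that $G$ extends to a $C^1$ function on a neighbourhood of $\lambda_0$ intersected with $\{\Im\zeta \ge 0\}$, with $\abs{G'}$ bounded there. Indeed, continuity of the boundary value $G(\lambda_0 + i0)$ identifies the distribution $\widehat K$ with an honest function near $\omega$; differentiability of $\lambda \mapsto G(\lambda + i0)$ yields $\widehat K \in C^1$ near $\omega$; and since $G$ is holomorphic in the interior, the Cauchy--Riemann equations give $\partial_\eta G(\lambda_0 + i\eta) = i\, G'(\lambda_0 + i\eta)$, whence
\begin{align*}
  \abs{G(\lambda_0 + i\eta) - G(\lambda_0)} = \abs{\int_0^\eta i\,G'(\lambda_0 + it)\,\rd t} \le \eta \sup \abs{G'} = C\eta,
\end{align*}
which is exactly \eqref{eq:main_1}.

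To prove the claim I would proceed as follows. First, since $E_0$ lies below the essential spectrum, $\psi_0$ decays exponentially (Agmon / Combes--Thomas estimates), and as $V_{\mathcal O}, V_{\mathcal P}$ grow at most linearly under Assumption \ref{assump:observables_and_perturb}, both $f$ and $g$ decay exponentially; in particular $\japx^s f, \japx^s g \in L^2(\R^d)$ for every $s \ge 0$, so all the weight one could want is available on the two vectors. Next I invoke the limiting absorption principle: because $\lambda_0 > 0$ is strictly inside the continuous spectrum, away from the only threshold $0$, and because there are no embedded eigenvalues in $[0,\infty)$ under Assumptions \ref{assump:V_differentiability}--\ref{assump:decay_V}, the sandwiched resolvent $\japx^{-s}(\zeta - H)^{-1}\japx^{-s}$ with $s > 1/2$ extends continuously from $\{\Im\zeta > 0\}$ to the real axis near $\lambda_0$. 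The decay $\abs{V} \lesssim \japx^{-2-\eps}$ of Assumption \ref{assump:decay_V} buys exactly one additional derivative of the resolvent of $H$ on the boundary, so the squared sandwiched resolvent $\japx^{-s}(\zeta - H)^{-2}\japx^{-s}$ with $s > 3/2$ likewise extends continuously up to the boundary. Writing $G(\zeta) = \lela \japx^s f, \japx^{-s}(\zeta - H)^{-1}\japx^{-s}\,\japx^s g\rira$ and $G'(\zeta) = -\lela \japx^s f, \japx^{-s}(\zeta - H)^{-2}\japx^{-s}\,\japx^s g\rira$ then exhibits $G$ as $C^1$ up to the boundary with bounded derivative. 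These two weighted-resolvent facts can be quoted from the Agmon--Kato--Jensen theory, or established by a bootstrap: the explicit Green's function of $-\Delta$ gives the $C^1$ boundary extension of the free sandwiched resolvent, which one propagates to $H$ through the resolvent identity together with a Fredholm argument, valid because $\lambda_0$ is neither a threshold nor an eigenvalue.

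The step I expect to be the main obstacle is this differentiated limiting absorption principle, i.e. the continuous boundary extension of $\japx^{-s}(\zeta - H)^{-2}\japx^{-s}$. Naively differentiating $G$ replaces $(\zeta-H)^{-1}$ by $(\zeta-H)^{-2} = (\bar\zeta - H)^{-1}{}^{*}(\zeta-H)^{-1}$, and applying the ordinary principle to each factor would only place $(\zeta-H)^{-1}g$ and $(\bar\zeta - H)^{-1}f$ in $L^2(\japx^{-s})$, whose pairing need not converge: one genuinely loses a power of the weight when squaring the resolvent. Overcoming this requires the sharp weighted bound with $s > 3/2$, and it is precisely here that the stronger decay $\japx^{-2-\eps}$ — rather than the $\japx^{-1-\eps}$ that would suffice for mere continuity of $\widehat K$ — is needed. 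The remaining ingredients, namely the exponential decay of $\psi_0$, the analyticity of the non-resonant term $F_2$, and the Cauchy--Riemann passage from $C^1$-up-to-the-boundary to the $O(\eta)$ bound, are comparatively routine.
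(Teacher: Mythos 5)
Your proposal is correct, and its architecture is the same as the paper's: reduce via \eqref{eq:K_frequency} to the boundary behaviour of $\lela f,(\zeta-H)^{-1}g\rira$ near $\lambda_{0}=\omega+E_{0}>0$ with $f,g$ exponentially decaying, dismiss the non-resonant term by analyticity, and get regularity up to the real axis from a limiting absorption principle built from the free resolvent, the resolvent identity, a Fredholm argument, and the absence of embedded eigenvalues. The genuine difference is the key lemma and its weight bookkeeping. The paper proves a self-contained LAP (Propositions \ref{prop:LAP_free} and \ref{prop:LAP_pop}) with \emph{symmetric} weights: $C^{k,\alpha}$ regularity as operators $L^{2}(\japx^{s})\to H^{2}(\japx^{-s})$ with $s=\tfrac12+k+\alpha$, under the hypothesis that $\japx^{2s+\eps}V$ is bounded, the regularity being extracted from a Fourier/trace argument (projected density of states in $H^{s}(\R)$, Lemma \ref{lem:trace}, with the Hardy-type Lemma \ref{lem:trace_hardy} feeding the Fredholm step). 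You instead quote the sharp Agmon--Kato--Jensen form of the differentiated LAP, in which what matters is the \emph{total} weight distributed asymmetrically between the two sides (Leibniz distribution of the $\lambda$-derivatives across the two traces); this is precisely what makes $\japx^{-2-\eps}$ decay of $V$ buy one derivative. The difference is substantive: instantiating the paper's Proposition \ref{prop:LAP_pop} at $k+\alpha=1$, i.e.\ $s=3/2$, as the paper's own proof of Theorem \ref{thm:main1} does, literally requires $\japx^{3+\eps}V$ bounded --- one full power more than Assumption \ref{assump:decay_V} supplies --- whereas your accounting matches the sharp tradeoff asserted in the paper's remark on potential decay ($\japx^{1+k+\alpha+\eps}V$ bounded yields $C^{k,\alpha}$). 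So on the very point you flag as the main obstacle, your route is the one that closes the argument under the stated decay assumption, at the price of citing rather than proving the central estimate; be aware that your fallback sketch (free Green's function, then resolvent identity and Fredholm) is exactly the paper's architecture, and it only avoids the weight loss you describe if it incorporates the asymmetric/Leibniz mechanism, which the paper's symmetric trace argument does not provide. The remaining differences are cosmetic: your packaging via boundary continuity of the sandwiched $R$ and $R^{2}$ together with Cauchy--Riemann and the fundamental theorem of calculus is equivalent to the paper's statement that $z\mapsto(z-H)^{-1}$ is $C^{k,\alpha}$ up to the boundary in the weighted operator topology, and your explicit disposal of the second (non-singular) resolvent term by positivity of $H-E_{0}$ is left implicit in the paper.
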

The proof of Theorem \ref{thm:main1}, in Section \ref{sec:LAP},
involves the study of the boundary values of the resolvent
$(z-H)^{-1}$ as $z$ approaches the real axis in the upper half complex
plane. This resolvent diverges as an operator on $L^{2}(\R^{d})$ as
$z$ approaches the spectrum of $H$. When $z$ approaches an eigenvalue
of $H$, this is a real divergence and the resolvent can not be defined
in any meaningful sense. However, when $z$ approaches the continuous
spectrum from the upper half plane, the divergence merely indicates a
loss of locality in the associated Green's function and, under
appropriate decay assumptions on $V$, the limit exists as an operator
on weighted spaces. This fact is known as a limiting absorption
principle, with a long history in mathematical physics; the proof we
use follows that of \cite{agmon1975spectral}.

\subsection{Discretization}
We now discretize our problem on a domain $[-L,L]^{d}$ with Dirichlet
boundary conditions. The corresponding approximations
$H_{L}, \psi_{0,L}$ and $E_{0,L}$ give rise to an approximate response
function $K_{L}$ (see exact definitions in Section
\ref{sec:truncation}). Our main result is then:
\begin{theorem}
  \label{thm:main_2}
  $K_{L}$ converges towards $K$ in the sense of tempered distributions.
  Furthermore, for all $\omega \in \R$ there are $\alpha > 0$, $C > 0$
  such that for all $0 < \eta < 1$, $L > 0$,
  \begin{align}
    \label{eq:main_2}
    |\widehat K_{L}(\omega+i\eta) - \widehat K(\omega+i\eta)| &\le C \frac{e^{-\alpha \eta L}}{\eta^{2}}
  \end{align}
\end{theorem}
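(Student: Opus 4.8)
The plan is to estimate the two holomorphic functions $\widehat K(z)$ and $\widehat K_L(z)$ directly at $z = \omega + i\eta$ through their resolvent representations, exploiting that away from the real axis the relevant Green's functions decay exponentially at a rate proportional to $\eta$. Writing $R(z) = (z-(H-E_0))^{-1}$ and $R_L(z) = (z-(H_L-E_{0,L}))^{-1}$, we have $\widehat K(z) = \lela V_{\mathcal O}\psi_0, R(z) V_{\mathcal P}\psi_0\rira - \lela V_{\mathcal P}\psi_0, R'(z) V_{\mathcal O}\psi_0\rira$ with $R'(z)=(z+(H-E_0))^{-1}$, and similarly for $\widehat K_L$ with the truncated data. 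The two terms are exchanged by $\omega\mapsto-\omega$ together with $V_{\mathcal O}\leftrightarrow V_{\mathcal P}$, so it suffices to treat the first.

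First I would reduce to a comparison of resolvents sharing common spectral data, by telescoping. Using the exponential convergence of the truncated ground state, $\abs{E_{0,L}-E_0}\lesssim e^{-cL}$ and $\norm{\japx^{k}(\psi_{0,L}-\psi_0)}\lesssim e^{-cL}$ (established in Section~\ref{sec:truncation}), together with the crude bounds $\norm{R(z)},\norm{R_L(z)}\le 1/\eta$ and the resolvent identity governing the shift $E_{0,L}\mapsto E_0$, each replacement of $\psi_{0,L}$ by $\psi_0$ and of $E_{0,L}$ by $E_0$ costs at most $e^{-cL}/\eta^{2}$. Since $\eta<1$, this is bounded by $e^{-\alpha\eta L}/\eta^{2}$ as soon as $\alpha\le c$, so these contributions are subdominant. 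It then remains to control $\lela V_{\mathcal O}\psi_0,(\widetilde R_L(z)-R(z))V_{\mathcal P}\psi_0\rira$, where $\widetilde R_L(z)=(z-(H_L-E_0))^{-1}$, i.e.\ the resolvent difference tested against the two exponentially localized vectors $h=V_{\mathcal O}\psi_0$ and $g=V_{\mathcal P}\psi_0$.

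The heart of the matter is a quantitative, $L$-uniform exponential decay estimate for the resolvents. Since all eigenvalues are real, $z=\omega+i\eta$ lies at distance at least $\eta$ from $\sigma(H-E_0)$ and from $\sigma(H_L-E_0)$ for every $\omega$. A Combes--Thomas argument --- conjugating by $e^{\gamma\japx}$ (note $\abs{\nabla\japx}\le 1$ and $\Delta\japx$ bounded) and absorbing the first-order twisting term $2\gamma\,\nabla\japx\cdot\nabla$ through $\norm{R(z)\nabla}\lesssim\sqrt{\abs{z}}/\eta$ into a Neumann series --- shows that the rate $\gamma=\alpha\eta$ is admissible, with $\alpha>0$ depending only on an upper bound for $\abs{\omega}$, and yields $\norm{e^{\alpha\eta\japx}R(z)e^{-\alpha\eta\japx}}\lesssim 1/\eta$ together with its $H^{1}$ counterpart, uniformly in $L$ for $\widetilde R_L$. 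To compare the resolvents I would use a geometric resolvent identity: picking a cutoff $\chi_L$ equal to $1$ on $[-L/2,L/2]^{d}$ and supported away from $\partial[-L,L]^{d}$, the function $\chi_L R(z)g$ is admissible for $H_L$ and satisfies $(H_L-E_0-z)(\chi_L R(z)g)=-\chi_L g+[-\Delta,\chi_L]R(z)g$, whence
\begin{align*}
  \widetilde R_L(z) g - R(z) g = (\chi_L - 1) R(z) g + \widetilde R_L(z)\,[-\Delta,\chi_L]\, R(z) g + \widetilde R_L(z)(1-\chi_L)g.
\end{align*}
Pairing against $h$ and using that $[-\Delta,\chi_L]$ is supported in the annulus $A_L$ at distance $\gtrsim L$ from the origin, each of the localized factors $\widetilde R_L(\bar z)h$ and $[-\Delta,\chi_L]R(z)g$ restricted to $A_L$ carries one factor $\eta^{-1}e^{-\alpha\eta L}$ from the weighted resolvent bound (the gradient piece of the commutator being controlled by its $H^{1}$ version); Cauchy--Schwarz on $A_L$ then gives $e^{-2\alpha\eta L}/\eta^{2}$, while the two terms containing $(\chi_L-1)$ are $O(e^{-cL}/\eta)$ and again subdominant. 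Relabelling $\alpha$ yields \eqref{eq:main_2}.

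I expect the main obstacle to be the third step: establishing the Combes--Thomas weighted bounds at rate linear in $\eta$ uniformly in $L$, in particular taming the first-order gradient term in the conjugated operator and producing the $H^{1}$ estimate needed for the commutator, all with constants independent of the box size. The distributional convergence $K_L\to K$ is comparatively soft: it follows from the strong resolvent convergence $H_L\to H$ (standard for Dirichlet truncation on expanding boxes) and the convergence of $(E_{0,L},\psi_{0,L})$, which give $K_L(\tau)\to K(\tau)$ pointwise, combined with the uniform bound $\abs{K_L(\tau)}\le 2\norm{V_{\mathcal O}\psi_{0,L}}\,\norm{V_{\mathcal P}\psi_{0,L}}\le C$ (using the weighted eigenfunction convergence and sublinearity of $V_{\mathcal O},V_{\mathcal P}$) and dominated convergence against Schwartz test functions.
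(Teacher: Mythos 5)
Your proposal is correct, and for the quantitative bound \eqref{eq:main_2} it is essentially the paper's argument: both rest on (i) the observation that multiplication by $\chi_{L}$ maps into $D(H) \cap D(H_{L})$, which circumvents the domain mismatch between $H$ and $H_{L}$, and (ii) Combes--Thomas estimates at rate proportional to $d(z,\sigma) \ge \eta$, uniform in $L$ (Lemma~\ref{lem:resolvent}, extended to the truncated operator), applied to the exponentially localized data $V_{\mathcal O}\psi_{0}$, $V_{\mathcal P}\psi_{0}$ and combined with the exponential convergence of $(E_{0,L},\psi_{0,L})$ to telescope away the truncated spectral data at cost $e^{-cL}/\eta^{2}$. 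The only difference there is organizational: you compare resolvents through the geometric resolvent identity, with the commutator $[-\Delta,\chi_{L}]$ supported in the annulus where both weighted resolvent bounds contribute a factor $\eta^{-1}e^{-c\eta L}$, whereas Lemma~\ref{lem:resolvent_convergence} instead inserts $R(z)^{-1}\chi_{L}R(z)$ and uses $(R(z)-R_{L}(z))R(z)^{-1}\chi_{L}R(z)=0$, avoiding commutators altogether; both yield $e^{-\alpha\eta L}/\eta^{2}$. Where you genuinely diverge is the distributional convergence $K_{L} \to K$: you use soft machinery --- strong resolvent convergence of the Dirichlet truncations, Trotter--Kato to pass to strong convergence of the propagators, pointwise convergence of $K_{L}(\tau)$ plus a uniform bound and dominated convergence --- whereas the paper proves the quantitative estimate $\|\chi_{L}(e^{-iHt}-e^{-iH_{L}t})\chi_{L}V_{\mathcal P}\psi_{0}\| \le P(t)/L$ via Duhamel's formula, the commutator $[\chi_{L},H]$, and the moment estimates of Lemma~\ref{lem:growth-dynamics}, then integrates against Schwartz test functions. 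Your route is shorter; its costs are that the ``standard'' strong resolvent convergence still requires a short proof in this extension-by-zero setting (your own geometric identity applied to compactly supported data, plus density and the uniform bound $\|R_{L}(z)\|\le 1/|\Im z|$, does it), and that it produces no rate, whereas the paper's argument gives an explicit $O(1/L)$ error after integration in time, quantifying how convergence degrades for slowly decaying test functions.
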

The proof of this theorem is given in Section \ref{sec:truncation}.
When $|\omega| < -E_{0}$ is not equal to a difference of eigenvalues,
the bound $e^{-\alpha \eta L}$ is pessimistic, and the decay rate is
actually independent of $\eta$ (as can be seen from the proof).

The convergence of $K_{L}$ towards $K$ in the sense of
distributions (i.e. when integrated against a quickly decaying
function of time) can be heuristically understood in as follows: since
the initial condition $\psi_{0}$ is localized close to the origin, for
moderate times (compared to some power of $L$) finite size effects are
not relevant; only for longer times (damped by the test function) will
the reflections against the boundary affect the value of $K_{L}$. To
obtain \eqref{eq:main_2}, we note that at a fixed $\eta > 0$, the
resolvent $(\lambda + i\eta - H)^{-1}$ is a well-defined operator, and
its kernel $G(x,y)$ decays exponentially for large $||x|-|y||$, with a
decay rate proportional to $\eta$. Since $\psi_{0}$ is exponentially
localized, the quantity $\widehat K(\omega+i\eta)$ only involves
quantities localized on a region of space of size of order $1/\eta$,
and can therefore be computed accurately when $L \gg 1/\eta$, leading
to our result.

It follows from the two results above that one can approximate
$\widehat K(\omega)$ for $|\omega| > -E_{0}$ by taking the limit
$L \to \infty$ (at finite $\eta$) then $\eta \to 0$, but not the
reverse. At a fixed box size $L$, the optimal $\eta$ is the one that minimizes
the total error $\frac{e^{-\alpha \eta L}}{\eta^{2}} + \eta$: up to
logarithmic factors, it is of order $\tfrac 1 L$, and so is the total error.

\subsection{Remarks}

\subsubsection{Decay of the potential and regularity of $\widehat K$.} Our
assumption that $|x|^{2+\varepsilon} V(x)$ is bounded guarantees that
$|\widehat K(\omega+i\eta) - \widehat K(\omega)|$ is of order $\eta$.
We actually show in our proof the stronger result that, if
$|x|^{1+k+\alpha+\varepsilon} V(x)$ is bounded for some
$k \in \N, \alpha \in [0,1]$, then
\begin{align*}
  \widehat K \in C^{k,\alpha}\Big(((-\infty, E_{0}) \cup (-E_{0}, +\infty)) + i [0,+\infty]\Big).
\end{align*}
However, long-range potentials (decaying like $1/|x|$) are not covered
by the results in this paper, due to the absence of a limiting
absorption principle in this case. Indeed, even showing the absence of
embedded eigenvalues becomes a delicate matter \cite{reed1978iv}. To
our knowledge, a limiting absorption principle with long-range
potentials has been proved only in the radial case
\cite{agmon-klein1992LAP_radial}.

\subsubsection{Higher order approximations.} In the common case
where $V_{\mathcal O} = V_{\mathcal P}$, it follows from the
Plemelj-Sokhotski formula
\eqref{eq:plemelj} that the imaginary part of
$\widehat K(\cdot+i\eta)$ is the convolution of the
imaginary part of $\widehat K$ with a Lorentzian profile of width
$\eta$ and height $1/\eta$, an approximation of the Dirac distribution. In
general, if $\phi$ is a Schwartz function of integral $1$,
$\phi_{\eta}(x) = \phi(x/\eta)/\eta$ and if $f$ is of class $C^{p+1}$
near $\omega$, then
  \begin{align*}
    (f \ast \phi_{\eta})(\omega) = f(\omega) + O(\eta^{p+1}),
  \end{align*}
  where the order $p$ of $\phi$ is the smallest integer such that
  $\int x^{p'} \phi(x) dx = 0$ for all $0 < p' \le p$ (see for instance
  \cite[Section 5.1]{cances2020numerical}). Since the Lorentzian
  kernel is even, we would naively expect an error proportional to
  $\eta^{2}$; however, the Lorentzian
  kernel has heavy tails (decaying like $1/x^{2}$) and therefore the
  error is only of order $\eta$ in general.

  When $V$ decays sufficiently rapidly, the above analysis suggests
  the possibility of using different kernels, such as a Gaussian
  kernel, or even a higher-order one. Such a possibility has to the
  best of our knowledge not been explored in the literature.
  
\subsubsection{Extensions.}
  We have here considered the one-electron model with a given
  Hamiltonian $H = -\Delta + V$ acting on $L^{2}(\R^{d})$. The following extensions can be
  considered
  \begin{itemize}
  \item We could consider models of the type $H = H_{0} + V$ with more
    general $H_{0}$. For instance, one can think of periodic operators
    $H_{0} = -\Delta + V_{\rm per}$, or lattice models acting on
    $\ell^{2}(\Z^{d})$, both of which can be analyzed using the Bloch
    transform. Extending our results needs two ingredients. The first
    is the error analysis of the effect of truncation on resolvents
    and on eigenvectors, which is complicated by the possibility of
    spectral pollution (see \cite{cances2014non}). The second is a
    limiting absorption principle for $H_{0}$. Following the proof in
    Section~\ref{sec:LAP}, this can be done at regular values
    of the dispersion relation, so that the energy isosurfaces form a
    smooth manifold over which a trace theorem can be established; see
    \cite{radosz2010principles} and references therein.
  \item We could consider models of several electrons. Our results can
    straightforwardly be extended to the case of non-interacting
    electrons, in which case the response function is simply a sum of
    one-electron response functions. The case of interacting electrons
    (using either the full many-body model, or mean-field models such as
    the Hartree model or time-dependent density functional theory with
    adiabatic exchange-correlation potentials) requires more care, and
    would be an interesting topic for further research.
  \end{itemize}
  
\subsubsection{Boundary conditions.}
  We here use Dirichlet boundary conditions; this is done for
  conceptual simplicity, and because Dirichlet boundary conditions
  yield a conforming scheme (in the sense that the eigenfunctions
  obtained at finite $L$ are valid trial functions for the whole-space
  problem). Using Neumann or periodic boundary conditions would
  presumably yield a similar result, but the mathematical analysis is
  slightly more involved.

  More interesting is the use of ``active'', frequency-dependent
  boundary conditions, designed to better reproduce the continuous
  spectrum. Such boundary conditions, conceptually based on an exact
  solution of the free resolvent outside a computational domain, are
  widely used in scattering problems (absorbing
  boundary conditions, perfectly matched layers \cite{antoine2017friendly}) and in the study of
  resonances in quantum chemistry (complex scaling, complex absorbing
  potential \cite{stefanov2005approximating, muga2004complex}).
\section{Numerical illustration}
We illustrate our results with a simple model. Instead of a
continuous model, we choose a discrete tight-binding model, set on
$\ell^{2}(\Z)$, with Hamiltonian
\begin{align*}
  H_{mn} = \delta_{m,n+1} + \delta_{m,n-1} + V \delta_{m,n} \delta_{n,0}.
\end{align*}
The first two terms (``hopping terms'') are analogous to a
kinetic energy and describe the motion of a particle to neighboring
sites. The third term, a compact perturbation of the free Hamiltonian,
is an impurity potential on site $0$.

This operator has continuous spectrum $[-2,2]$. We choose
$V=-4$, which leads to a single negative eigenvalue $E_{0} \approx -4.47$. We choose both
for the perturbing potential $V_{\mathcal P}$ and for the observable $V_{\mathcal O}$ the potential
$\delta_{n0}$ localized on site $0$.

To compute $K_{L}$, we truncate the Hamiltonian to a finite set of
$2L+1$ sites $\{-L,\dots,L\}$, with Dirichlet boundary conditions and
diagonalize the resulting Hamiltonian $H_{L}$ to obtain the eigenpairs
$(\psi_{n,L},E_{n,L})$ for $n=0,\dots,2L$, ordered by increasing
eigenvalue. The expression for $K_{L}$ and $\widehat K_{L}$ can be
expanded in this basis, turning into ``sum-over-states'' formulas.

\begin{figure}[h!]
  \centering
  \includegraphics[width=.49\textwidth]{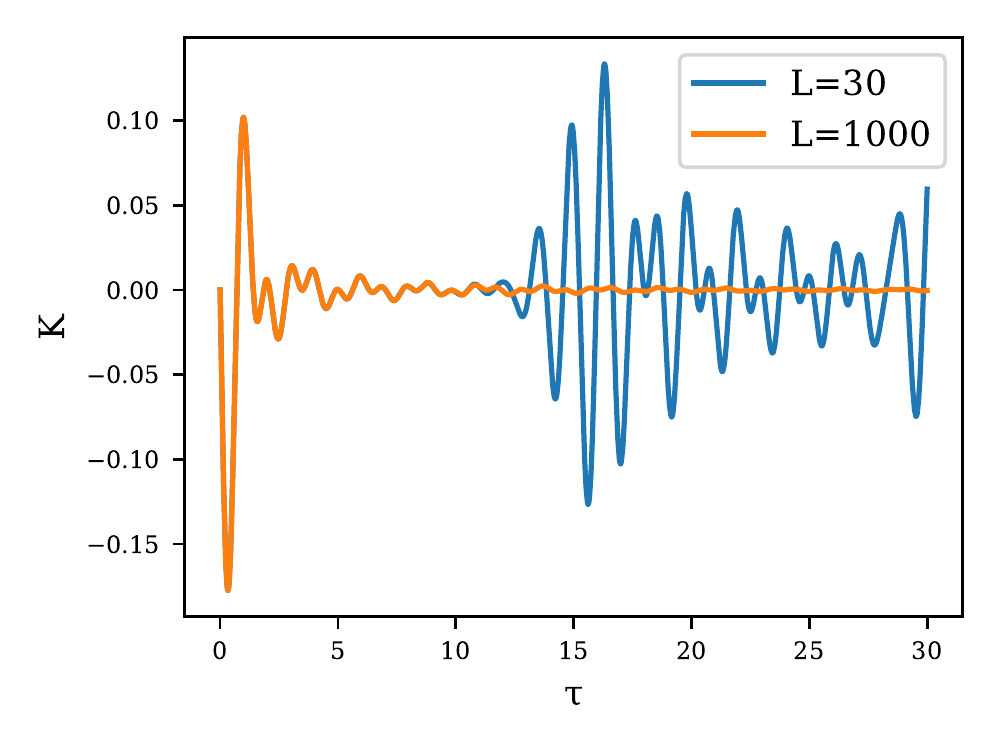}
  \caption{Time response function $K_{L}(\tau).$}
  \label{fig:fig1}
\end{figure}

We plot in Figure \ref{fig:fig1} the response function $K_{L}(\tau)$
for different values of $L$. Since $V_{\mathcal O} = V_{\mathcal P}$ and the spectrum of
$H$ is continuous except for the single bound state, the exact
response function $K(\tau)$ decays to zero, as the initial disturbance
propagates to infinity. However when observed on a finite-sized box
for long times, spurious reflections at the boundary introduce
non-decaying oscillations.
  
\begin{figure}[h!]
  \centering
  \includegraphics[width=.49\textwidth]{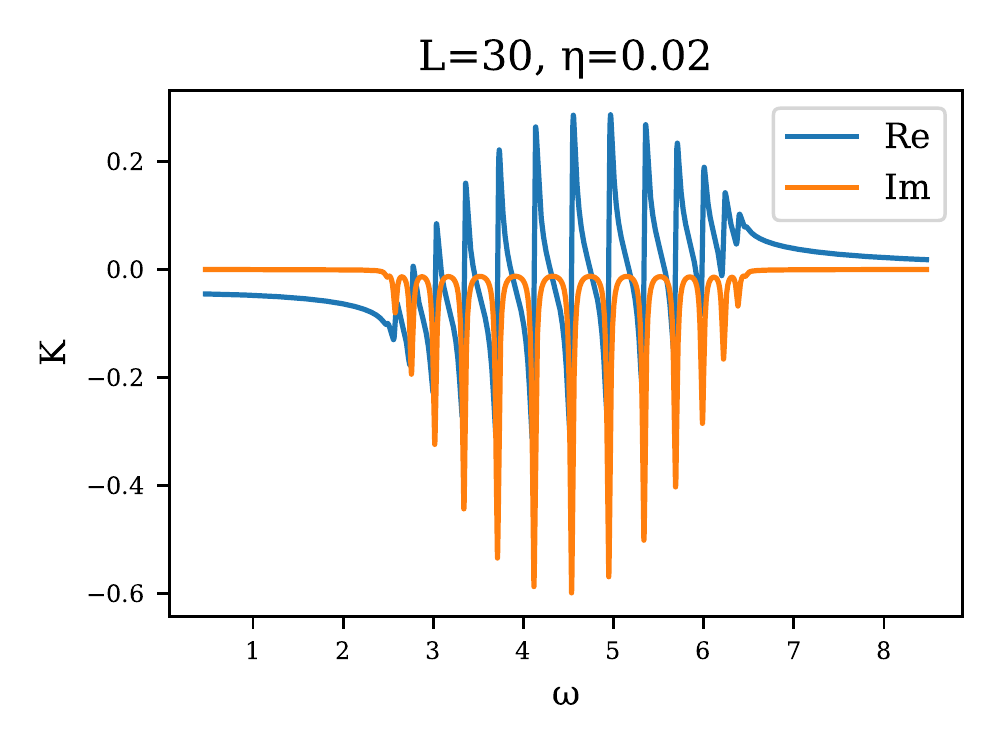}
  \includegraphics[width=.49\textwidth]{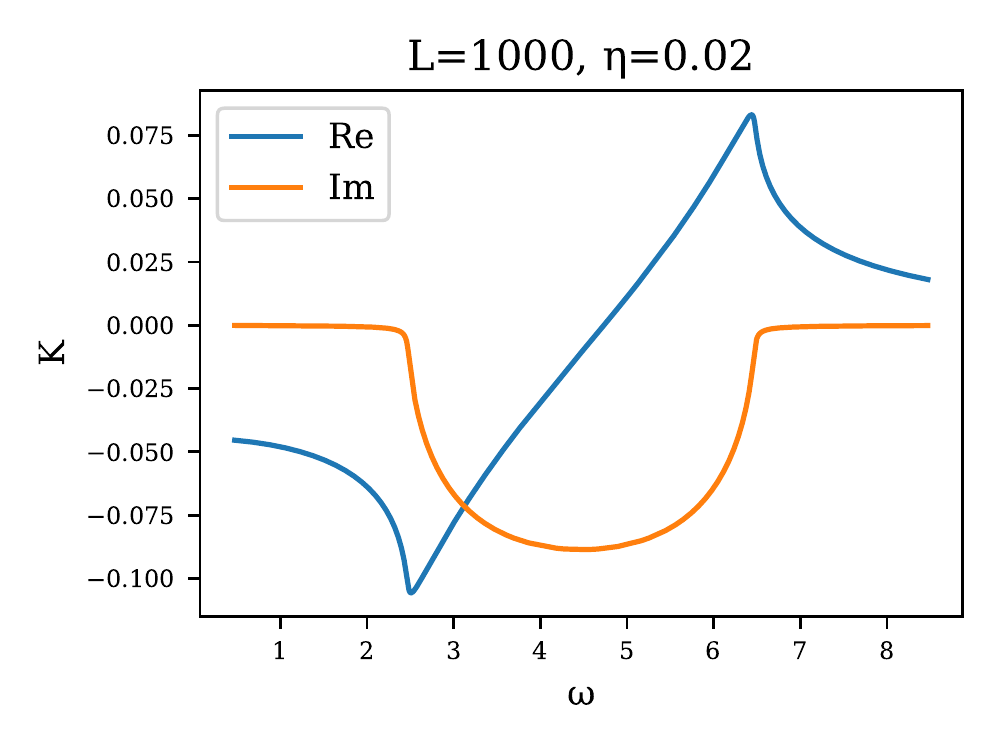}
  
  \includegraphics[width=.49\textwidth]{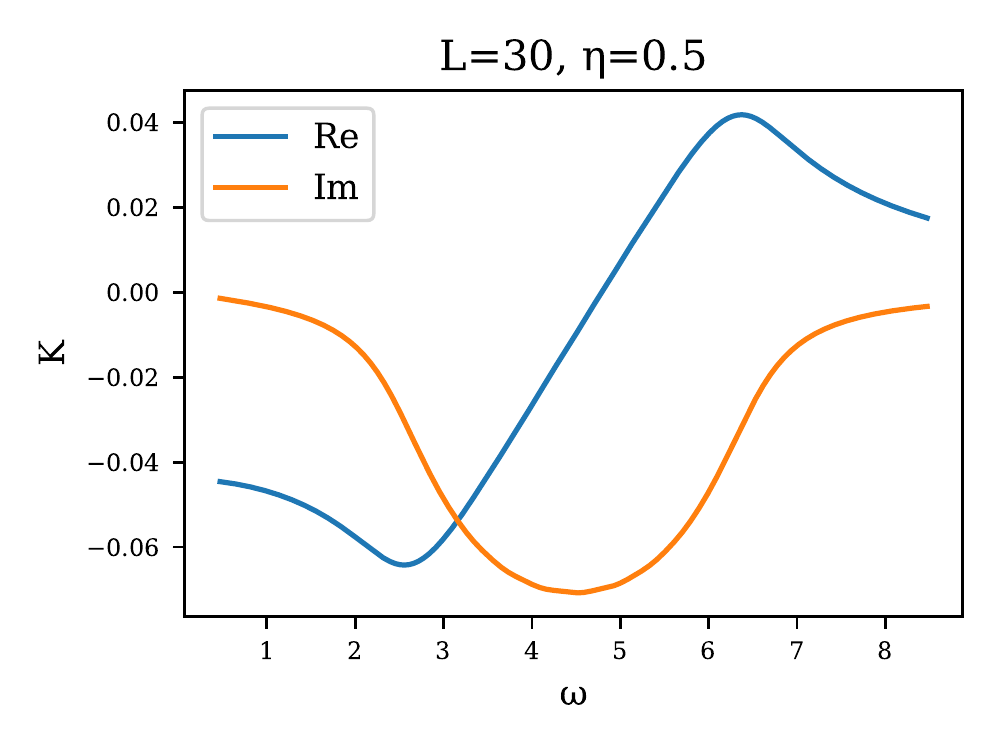}
  \includegraphics[width=.49\textwidth]{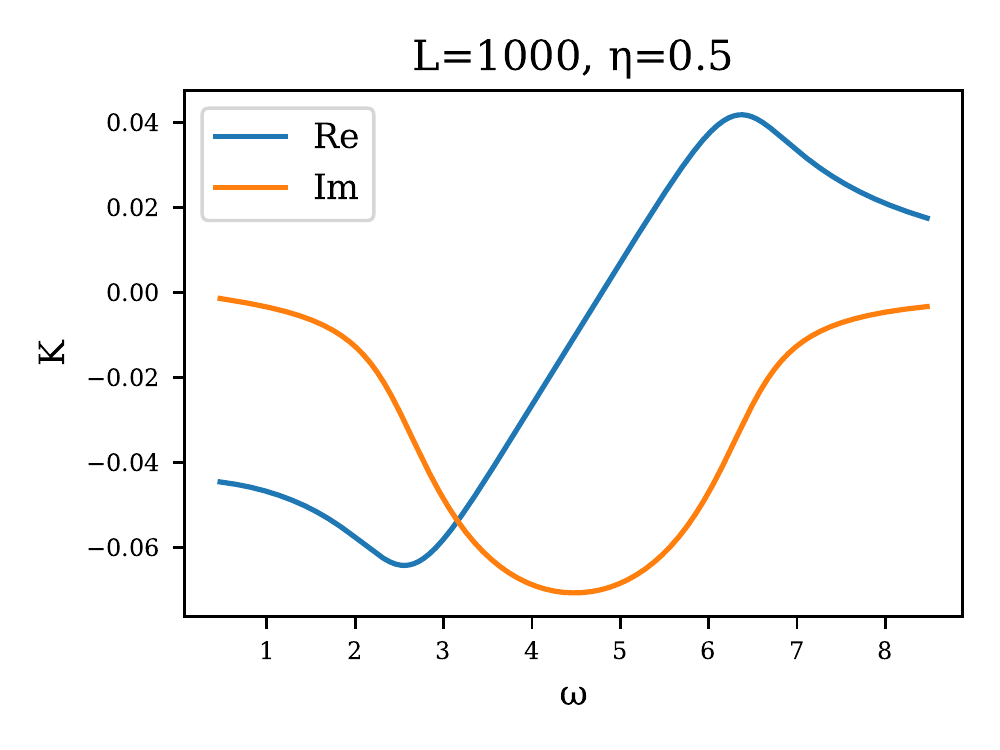}

\caption{Frequency response function $\widehat{K}_{L}(\omega+i\eta)$.}
  \label{fig:fig2}
\end{figure}

This same phenomenon can be seen in frequency space in Figure
\ref{fig:fig2}, where we plot the frequency response function
$\widehat K_{L}(\omega+i\eta)$ for different values of $\eta$ and $L$.
We plot the region $\omega \in [0, 9]$, which contains the region
$[-2, 2] - E_{0}$ corresponding to ionization; not represented is the
other ionization region $E_{0} - [-2, 2]$. When $\eta$ is small and
$L \ll 1/\eta$, the discrete nature of the spectrum is evident, and
the response function is composed of individual peaks. When
$L \gg 1/\eta$, these peaks are blurred into a continuous function.
Higher $\eta$ result in more accurate functions at moderate $L$, at
the price of over-smoothing.
\section{The Kubo formula}
\label{sec:Kubo}
We begin by studying the eigenfunction $\psi_{0}$ associated to the
eigenvalue $E_{0}$.
  \begin{lemma}
    \label{lem:psi0_exp_loc}
    There is $\alpha_{0} > 0$ such that $\psi_{0} \in H^{2}(e^{\alpha_{0} \japx})$.
  \end{lemma}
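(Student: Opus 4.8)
The plan is to prove exponential localization by a weighted energy estimate of Agmon type, exploiting that $E_{0}$ lies strictly below the bottom $0$ of the essential spectrum and that $V \to 0$ at infinity (a consequence of Assumption \ref{assump:decay_V}, which together with the smoothness in Assumption \ref{assump:V_differentiability} also makes $V$ globally bounded). Fix any rate $0 < \alpha_{0} < \sqrt{-E_{0}}$, so that $\delta := -E_{0} - \alpha_{0}^{2} > 0$; the goal is $e^{\alpha_{0}\japx}\psi_{0} \in H^{2}(\R^{d})$, which is exactly $\psi_{0} \in H^{2}(e^{\alpha_{0}\japx})$. To avoid the circularity inherent in multiplying by an unbounded exponential, I would first introduce regularized weights: bounded Lipschitz functions $F_{N}$ with $\abs{\nabla F_{N}} \le \alpha_{0}$ everywhere and $F_{N} \nearrow \alpha_{0}\japx$ pointwise (for instance $F_{N} = \alpha_{0}\japx/(1 + \japx/N)$). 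Since $F_{N}$ is bounded and $\psi_{0} \in H^{2}(\R^{d})$, the function $g_{N} := e^{F_{N}}\psi_{0}$ lies in $H^{1}(\R^{d})$, which legitimizes the integrations by parts below.

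Conjugating the eigenequation $(-\Delta + V - E_{0})\psi_{0} = 0$ by $e^{F_{N}}$ and testing against $g_{N}$ (taking real parts), I would derive the identity
\begin{align*}
  \norm{\nabla g_{N}}^{2} + \int_{\R^{d}}\big(V - E_{0} - \abs{\nabla F_{N}}^{2}\big)\abs{g_{N}}^{2} = 0,
\end{align*}
in which the first-order drift term and the $\Delta F_{N}$ contribution cancel after integration by parts. Using $\abs{\nabla F_{N}}^{2} \le \alpha_{0}^{2}$ and discarding the nonnegative gradient term gives $\delta\int\abs{g_{N}}^{2} \le \int\abs{V}\,\abs{g_{N}}^{2}$. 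Choosing $R$ so that $\abs{V}\le \delta/2$ for $\abs{x}>R$ (possible since $V\to 0$) and splitting the integral, the far-field part is absorbed into the left-hand side, leaving
\begin{align*}
  \tfrac{\delta}{2}\int_{\R^{d}}\abs{g_{N}}^{2} \le \norm{V}_{L^{\infty}}\int_{\abs{x}\le R}\abs{g_{N}}^{2} \le \norm{V}_{L^{\infty}}\, e^{2\alpha_{0}\langle R\rangle}\,\norm{\psi_{0}}^{2},
\end{align*}
a bound uniform in $N$ (and the same identity controls $\norm{\nabla g_{N}}$ uniformly). Letting $N\to\infty$ and applying monotone convergence to $e^{2F_{N}}\abs{\psi_{0}}^{2} \nearrow e^{2\alpha_{0}\japx}\abs{\psi_{0}}^{2}$ then yields $\psi_{0} \in L^{2}(e^{\alpha_{0}\japx})$ together with $e^{\alpha_{0}\japx}\psi_{0} \in H^{1}(\R^{d})$.

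To upgrade from $H^{1}$ to $H^{2}$ I would return to the equation satisfied by $g := e^{\alpha_{0}\japx}\psi_{0}$. A direct computation using $-\Delta\psi_{0} = (E_{0} - V)\psi_{0}$ gives, with $F = \alpha_{0}\japx$,
\begin{align*}
  -\Delta g = (E_{0} - V)g - 2\nabla F\cdot\nabla g + \big(\abs{\nabla F}^{2} - \Delta F\big)g,
\end{align*}
whose coefficients $E_{0}-V$, $\nabla F$, and $\abs{\nabla F}^{2}-\Delta F$ are all bounded (because $V$, $\nabla\japx$, and $\Delta\japx$ are bounded). Since $g,\nabla g \in L^{2}(\R^{d})$, the right-hand side lies in $L^{2}(\R^{d})$, so $\Delta g \in L^{2}(\R^{d})$ and elliptic regularity gives $g \in H^{2}(\R^{d})$, which is the claim.

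I expect the only delicate point to be the rigorous justification of the weighted energy identity, and specifically the closing of the circular dependence whereby multiplying by the exponential weight presupposes the very decay one is trying to establish. The regularization $F_{N}$, combined with the uniform bound and monotone convergence, is precisely what removes this obstacle; the remaining steps are a bounded-coefficient elliptic bootstrap. An alternative would be to invoke a Combes–Thomas estimate or Agmon's general theory, but the self-contained energy argument above is the cleanest route in this setting.
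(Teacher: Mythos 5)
Your proof is correct, but it follows a genuinely different route from the paper. The paper's argument is a two-line reduction to machinery it already has: it splits $V = V_{c} + V_{\varepsilon}$ with $V_{c}$ compactly supported and $\norm{V_{\varepsilon}}_{L^{\infty}} \le -E_{0}/2$, rewrites the eigenvalue equation as $\psi_{0} = -(-\Delta + V_{\varepsilon} - E_{0})^{-1} V_{c}\psi_{0}$, and then invokes the Combes--Thomas estimate of Lemma \ref{lem:resolvent} (proved in the appendix and needed elsewhere in the paper anyway), which bounds that resolvent from $L^{2}(e^{\alpha\japx})$ to $H^{2}(e^{\alpha\japx})$ for $\alpha$ small; since $V_{c}\psi_{0}$ lies in every exponentially weighted space, both the exponential decay and the weighted $H^{2}$ regularity come out in a single step. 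You instead run a self-contained Agmon-type energy estimate with regularized weights $F_{N}$, followed by an elliptic bootstrap to upgrade from $H^{1}$ to $H^{2}$. Your conjugation identity and the cancellation of the $\Delta F_{N}$ terms are correct, the uniform-in-$N$ bound and monotone-convergence passage are sound (the gradient part needs a small Fatou/weak-compactness remark that you gloss over, and $g_{N}$ is in fact in $H^{2}$, not just $H^{1}$, since $F_{N}$ is smooth and bounded with bounded derivatives --- which is what fully legitimizes the testing), and the final bootstrap is standard. What each approach buys: the paper's is shorter and recycles a lemma, but is explicitly non-sharp (the paper remarks that its method gives only ``some $\alpha_{0}>0$ small enough''); yours yields any rate $\alpha_{0} < \sqrt{-E_{0}}$, i.e.\ precisely the sharp decay rate the paper says would require ``sharper Combes--Thomas estimates.'' For the purposes of Lemma \ref{lem:psi0_exp_loc} as stated, either suffices, and the paper only ever uses some positive $\alpha_{0}$.
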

  \begin{proof}
        Since $V$ decays at infinity, for all $\varepsilon > 0$, we can write
$V = V_{c} + V_{\varepsilon}$ with $V_{c}$ compactly supported and
$\|V_{\varepsilon}\|_{L^{\infty}(\R^{d})} \le \varepsilon$. Then, for
$\varepsilon\le -E_{0}/2$ we can write
  \begin{align*}
    \psi_{0} = - (-\Delta + V_{\varepsilon} - E_{0})^{-1} V_{c} \psi_{0}.
  \end{align*}
  Since $V_{c}$ is compactly supported, $V_{c} \psi_{0}$ is in
  $L^{2}(e^{\alpha \japx})$ for all $\alpha > 0$, and so by Lemma
  \ref{lem:resolvent}, $\psi_{0}$ belongs to
  $H^{2}(e^{\alpha_{0} \japx})$ for some $\alpha_{0} > 0$ small
  enough.
\end{proof}
Note that this estimate is not sharp since the actual decay rate of
$\psi_{0}$ is $\sqrt{-E_{0}}$ (which can be obtained by sharper
Combes-Thomas estimates), but this will be sufficient for our
purposes.

We now prove Kubo's formula.
\begin{proof}[Proof of Theorem \ref{thm:Kubo}]
  Let $U_0(t,s) = e^{-i H (t-s)}$ be the unitary propagator of the
  unperturbed Hamiltonian $H=-\Delta+V$, and $U_\varepsilon(t,s)$ that of
  the perturbed Hamiltonian $H_\varepsilon(t)=-\Delta+V+\varepsilon f(t) V_{\mathcal P}$,
  whose existence is guaranteed by Lemma~\ref{lem:growth-dynamics}. By
  the Duhamel/variation of constant formula,
  \begin{align*}
    \psi(t)&= \underbrace{e^{-i E_{0} t}\psi_{0}}_{\psi^{0,0}(t)} \;\underbrace{- i \varepsilon \int_{0}^{t} U_{0}(t,t') f(t') V_{\mathcal P} U_{\varepsilon}(t',0) \psi_{0}dt'}_{\varepsilon \psi^{1,\varepsilon}(t)}.
  \end{align*}
  Iterating this formula, we obtain the first-order Dyson expansion
  \begin{align*}
    \psi^{1,\varepsilon}(t) = \underbrace{- i \int_{0}^{t} f(t')  U_{0}(t,t') V_{\mathcal P} U_{0}(t',0) \psi_{0}dt'}_{\psi^{1,0}(t)} \;\underbrace{-\varepsilon\int_{0}^{t}\int_{0}^{t'}  U_{0}(t,t') f(t') V_{\mathcal P} U_{0}(t',t'') f(t'') V_{\mathcal P} U_{\varepsilon}(t'',0) \psi_{0}dt' dt''}_{\varepsilon \psi^{2,\varepsilon}(t)}.
  \end{align*}
  From $\psi(t) = \psi^{0,0}(t) + \varepsilon \psi^{1,0}(t) +
  \varepsilon^{2} \psi^{2,\varepsilon}(t)$ it follows that
  \begin{align*}
    \langle \psi(t), V_{\mathcal O} \psi(t) \rangle &= \langle \psi_{0}, V_{\mathcal O} \psi_{0} \rangle +  \varepsilon \Big( \langle \psi^{1,0}(t) , V_{\mathcal O} \psi^{0,0}(t)\rangle +  \langle \psi^{0,0}(t) , V_{\mathcal O} \psi^{1,0}(t)\rangle\Big) \\
    &+ \underbrace{\varepsilon^{2} \Big( \langle \psi^{2,\varepsilon}(t) , V_{\mathcal O} \psi^{0,0}(t) \rangle +  \langle \psi^{0,0}(t) , V_{\mathcal O} \psi^{2,\varepsilon} \rangle + 2 \langle \psi^{1,0}(t), V_{\mathcal O} \psi^{1,0}(t)\rangle\Big)}_{R_{\varepsilon}(t)}
  \end{align*}
  The first-order term can be computed as
  \begin{align*}
    \langle \psi^{0,0}(t), V_{\mathcal O} \psi^{1,0}(t) \rangle + {\rm c.c.} &= \left\langle  e^{iE_{0} t} V_{\mathcal O} \psi_{0}, -i \int_{0}^{t} f(t') e^{-iH(t-t')} V_{\mathcal P} e^{-i E_{0} t'} \psi_{0} dt'\right\rangle + {\rm c.c.}\\
    &= -i\int_{0}^{t} f(t') \langle V_{\mathcal O}\psi_{0}, e^{-i(H-E_{0})(t-t')} V_{\mathcal P} \psi_{0} \rangle dt' + {\rm c.c.}.\\
    &= (K \ast f)(t).
  \end{align*}
  Since $|V_{\mathcal O}(x)| \lesssim 1+|x|$ and $\psi_{0} \in L^{2}(e^{\alpha_{0} \japx})$,
  \begin{align*}
    |R_{\varepsilon}(t)| \lesssim \|\psi^{2,\varepsilon}(t)\| + \|(1+|x|) \psi^{1,0}(t)\|.
  \end{align*}
  Using $|V_{\mathcal P}(x)| \lesssim 1+|x|$ and Lemma \ref{lem:growth-dynamics},
  we get
\begin{align*}
  \|\psi^{2,\varepsilon}(t)\| &\lesssim (1+|t|^{2})   \sup_{t' \in [0,t], t'' \in [0,t]} \|U_{0}(t,t') V_{\mathcal P} U_{0}(t',t'') V_{\mathcal P} U_{\varepsilon}(t'',0) \psi_{0}\| \\
  &\lesssim (1+|t|^{2}) \sup_{t' \in [0,t], t'' \in [0,t]} \|(1+|x|) U_{0}(t',t'') V_{\mathcal P} U_{\varepsilon}(t'',0) \psi_{0}\|\\
  &\lesssim (1+|t|^{4}) \sup_{t'' \in [0,t]} \Big( \|(1+|x|) V_{\mathcal P} U_{\varepsilon}(t'',0) \psi_{0}\| + \| \nabla (V_{\mathcal P} U_{\varepsilon}(t'',0) \psi_{0})\| \Big)\\
  &\lesssim (1+|t|^{4}) \sup_{t'' \in [0,t]} \Big( \||x|^{2} U_{\varepsilon}(t'',0) \psi_{0}\| + \| \nabla U_{\varepsilon}(t'',0) \psi_{0}\| + \|\psi_{0}\|\Big)\\
  &\lesssim (1+|t|^{8}) \left(\||x|^{2}  \psi_{0}\| + \|\Delta \psi_{0}\| + \|x \otimes \nabla \psi_{0}\| + \|\psi_{0}\|\right)
\end{align*}
The bound on $R_{\varepsilon}(t)$ then follows by establishing a bound
on $\|(1+|x|) \psi^{1,0}(t)\|$ by the same method.
\end{proof}
\section{Properties of the response function $K$}
\label{sec:LAP}

Theorem~\ref{thm:main1} is a consequence of a limiting absorption
principle for the Hamiltonian $H = -\Delta + V$ stated in
Proposition~\ref{prop:LAP_pop}. Our proof is a simplification of the
one in Agmon \cite{agmon1975spectral}, with a careful tracking of the
regularity with respect to the spectral parameter.

We begin by studying the free Laplacian.
\begin{proposition}[Limiting absorption principle for the free Laplacian]
\label{prop:LAP_free}
Let $s = \tfrac 1 2 + k + \alpha$ for $k \in \N$ and
$\alpha \in [0,1]$. The resolvent $(z+\Delta)^{-1}$ defined for
${\rm Im} z > 0$ extends to an operator of class $C^{k,\alpha}$ on the
semi-open set $(0,+\infty) + i [0,+\infty)$, in the topology of
bounded operators from $L^{2}\left(\japx^{s}\right)$ to
$H^{2}\left(\japx^{-s}\right)$.
\end{proposition}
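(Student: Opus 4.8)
The plan is to pass to the Fourier side, where the resolvent becomes a multiplication operator with an explicit pole, and to reduce the boundary regularity to a one-dimensional Cauchy-transform estimate fed by a trace theorem. Recall that $\cF$ is an isomorphism $L^{2}(\japx^{s}) \to H^{s}(\R^{d})$ (multiplication by $\japx^{s}$ in $x$ corresponds to the Bessel potential $\langle D_{\xi}\rangle^{s}$ in $\xi$), so with $\hat f = \cF f$ and $m_{z}(\xi) = (z - |\xi|^{2})^{-1}$ the operator $(z+\Delta)^{-1}$ is conjugate to multiplication by $m_{z}$, and it suffices to control the bilinear form
\[
  B_{z}(f,g) = \lela \hat g, m_{z}\hat f \rira_{L^{2}(\R^{d})} = \int_{\R^{d}} \frac{\overline{\hat g(\xi)}\,\hat f(\xi)}{z - |\xi|^{2}}\,d\xi
\]
by $\norm{f}_{L^{2}(\japx^{s})}\norm{g}_{L^{2}(\japx^{s})} = \norm{\hat f}_{H^{s}}\norm{\hat g}_{H^{s}}$, with $C^{k,\alpha}$ dependence on $z$ up to the boundary $(0,+\infty)$. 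The stronger target $H^{2}(\japx^{-s})$ is then reached for free: from $(z+\Delta)(z+\Delta)^{-1} = I$ one gets $\Delta(z+\Delta)^{-1} = I - z(z+\Delta)^{-1}$, so an $L^{2}(\japx^{s}) \to L^{2}(\japx^{-s})$ bound upgrades to an $H^{2}(\japx^{-s})$ bound by weighted elliptic regularity (using $\norm{f}_{L^{2}(\japx^{-s})} \le \norm{f}_{L^{2}(\japx^{s})}$ and that the weight commutes with $\Delta$ up to faster-decaying lower-order terms), and the $z$-regularity is inherited.

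First I would introduce polar coordinates $\xi = r\omega$ and substitute $u = r^{2} = |\xi|^{2}$, which turns $B_{z}$ into a genuinely one-dimensional Cauchy integral
\[
  B_{z}(f,g) = \int_{0}^{\infty} \frac{G_{f,g}(u)}{z - u}\,du, \qquad G_{f,g}(u) = \tfrac12 u^{(d-2)/2}\int_{S^{d-1}} \overline{\hat g(\sqrt u\,\omega)}\,\hat f(\sqrt u\,\omega)\,d\omega,
\]
so that all the singular behaviour is concentrated in the scalar pole $(z-u)^{-1}$ located at $u = \Re z$. Localizing with a cutoff $\chi$ equal to $1$ near the point $\lambda \in (0,+\infty)$ of interest, the far part $\int (1-\chi)G_{f,g}/(z-u)\,du$ is holomorphic in a neighbourhood of $\lambda$ and contributes a $C^{\infty}$ term, while the near part is a compactly supported Cauchy integral. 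For the latter I would invoke the classical Privalov/Plemelj–Sokhotski theorem: the Cauchy transform of a compactly supported $C^{k,\alpha}$ density extends from the upper half-plane to the real axis as a $C^{k,\alpha}$ function. Integration by parts makes the mechanism transparent, since $\partial_{z}^{j}B_{z} = \int G_{f,g}^{(j)}(u)(z-u)^{-1}\,du$: each $z$-derivative is exactly traded against one $u$-derivative of the density.

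This identifies the crux, and the main obstacle, as the third step: showing that $u \mapsto G_{f,g}(u)$ is of class $C^{k,\alpha}$ on compact subsets of $(0,+\infty)$ with norm controlled by $\norm{\hat f}_{H^{s}}\norm{\hat g}_{H^{s}}$. This is precisely a trace theorem with sharp regularity tracking in the radial parameter: for $s = \tfrac12 + k + \alpha$, the radial slicing map $r \mapsto \hat f(r\,\cdot) \in L^{2}(S^{d-1})$ is $C^{k,\alpha}$ away from the origin, the $\tfrac12$ being consumed by restriction to a single sphere, the integer $k$ by the $k$ radial derivatives that the $z$-differentiation demands, and the fractional part $\alpha$ supplying the Hölder modulus of the top derivative. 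Composing with the smooth change of variables $u = r^{2}$ (away from $u=0$) and taking the $L^{2}(S^{d-1})$ pairing of two such curves (an algebra under products) yields the claimed regularity of $G_{f,g}$. The delicate point is the borderline one-dimensional embedding $H^{1/2+k+\alpha} \hookrightarrow C^{k,\alpha}$ underlying the slicing, which is exactly where the exponent $s = \tfrac12 + k + \alpha$ is forced and which must be handled carefully through Hölder–Zygmund endpoint arguments, especially at $\alpha \in \{0,1\}$.

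Assembling the second through fourth steps produces the bilinear bound for $\partial_{z}^{k}B_{z}$ together with the Hölder-$\alpha$ continuity of this top derivative up to $(0,+\infty) + i[0,+\infty)$. By duality this is the operator bound $L^{2}(\japx^{s}) \to L^{2}(\japx^{-s})$ of class $C^{k,\alpha}$, and the bootstrap of the first paragraph then delivers the stated $C^{k,\alpha}$ operator bound into $H^{2}(\japx^{-s})$.
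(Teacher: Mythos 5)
Your proposal follows the paper's own reduction for its entire first half: pass to the Fourier side, split off a smooth far-from-$\lambda$ part, and write the near part as a one-dimensional Cauchy integral $\int G_{f,g}(u)/(z-u)\,\mathrm{d}u$ of the projected density of states (your $G_{f,g}$ is exactly the paper's $D_{\phi\psi}$, and your radial-slicing "crux" is exactly the paper's trace result, Lemma~\ref{lem:trace}); the upgrade to the $H^{2}(\japx^{-s})$ topology is routine in both treatments. Where you genuinely diverge is in how boundary regularity in $z$ is extracted. You convert the density to a \emph{pointwise H\"older} object, $G_{f,g}\in C^{k,\alpha}$, via the borderline embedding $H^{1/2+k+\alpha}(\R)\hookrightarrow C^{k,\alpha}$, and then cite Plemelj--Privalov regularity of Cauchy integrals of $C^{k,\alpha}$ densities. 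The paper never leaves the $L^{2}$-based scale: it keeps $D_{\phi\psi}\in H^{s}(\R)$, Fourier-transforms the Cauchy kernel to get $2\pi i\int_{0}^{\infty}e^{iz\tau}\,\widecheck{D}_{\phi\psi}(\tau)\,\mathrm{d}\tau$, and obtains the $C^{k,\alpha}$ dependence on $z$ from the elementary bound $|\partial_{z}^{k}e^{iz_{1}\tau}-\partial_{z}^{k}e^{iz_{2}\tau}|\lesssim|z_{1}-z_{2}|^{\alpha}\tau^{k+\alpha}$ paired against $\langle\tau\rangle^{s}\widecheck{D}_{\phi\psi}\in L^{2}$. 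For $0<\alpha<1$ your route is classical and correct, and arguably more transparent.

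The genuine gap is at the endpoints $\alpha\in\{0,1\}$, which the statement includes and which you defer to ``H\"older--Zygmund endpoint arguments''. This is not bookkeeping: both of your black-box tools are \emph{false} there. The embedding $H^{1/2+k+\alpha}(\R)\hookrightarrow C^{k,\alpha}$ fails at $\alpha=0$ ($H^{1/2}$ functions need not be bounded, let alone continuous) and at $\alpha=1$ ($H^{3/2}$ functions need not be Lipschitz, only Zygmund-class); likewise Plemelj--Privalov requires $0<\alpha<1$, since the Cauchy transform of a merely continuous (resp.\ Lipschitz) compactly supported density need not extend continuously (resp.\ Lipschitz-continuously) to the real axis. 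So your plan, as written, does not prove the proposition for $k=0$, $\alpha=1$, $s=3/2$ --- which is precisely the case invoked, through Proposition~\ref{prop:LAP_pop}, in the proof of Theorem~\ref{thm:main1} to get the linear-in-$\eta$ estimate \eqref{eq:main_1}. This is the structural reason the paper avoids ever putting H\"older regularity on the density: keeping everything $L^{2}$-in-$\tau$ postpones the endpoint difficulty to a single explicit estimate (to be fair, even there a naive Cauchy--Schwarz produces the divergent weight $\langle\tau\rangle^{-1/2}$, and one must use the splitting $|e^{iz_{1}\tau}-e^{iz_{2}\tau}|\le\min(2,|z_{1}-z_{2}|\tau)$ and handle a logarithmic loss at $\alpha=1$). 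If you want to keep your Privalov-based route, you should either restrict the claim to $0<\alpha<1$ or reformulate the density regularity and the Cauchy-transform mapping in a Besov/Zygmund scale where both steps survive at the endpoints.
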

\begin{proof}
  Let $\lambda_{0} > 0.$ Let $\chi$ be a smooth cutoff function, equal to
  $1$ in $[\lambda_{0}/2, 2\lambda_{0}]$ and to zero outside of
  $[\lambda_{0}/3,3\lambda_{0}]$. Let $\psi \in L^{2}(\japx^{s})$, and $\phi$ belong to the $L^{2}$-dual of
  $H^{2}\left(\japx^{-s}\right)$.

  Let $M_{\chi}$ be the multiplication operator in Fourier space
  defined by $\cF (M_{\chi} \psi)(q) = \chi(|q|^{2}) \cF(q)$. Then by
  spectral calculus, $(z+\Delta)^{-1} (1-M_{\chi})$ extends to a
  $C^{k,\alpha}$ operator on a set
  $[\lambda_{0}-\varepsilon, \lambda_{0}+\varepsilon] + i [0,
  +\infty)$ for $\varepsilon$ small enough, in the topology of
  operators $L^{2}(\R^{d})$ to $H^{2}(\R^{d})$. Therefore, it is
  enough to consider the term
    \begin{align}
      \notag\langle \phi, (z+\Delta)^{-1} M_{\chi} \psi\rangle &= \frac 1 {(2\pi)^{d}} \int_{\R^{d}} \frac{\chi\left(|q|^{2}\right) \cF \phi(q)^{*} \cF \psi(q)}{z - |q|^{2}} \mathrm{d}q\\
      &=\int_{\R} \frac{D_{\phi\psi}(\lambda)}{z - \lambda} \mathrm{d}\lambda,\label{eq:spectral_repr}
    \end{align}
    with the projected density of states
    \begin{align*}
      D_{\phi\psi}(\lambda) = \frac 1 {(2\pi)^{d}} \lambda^{(d-2)/2}\chi\left(\lambda\right) \int_{S^{d-1}} \cF \phi(\sqrt \lambda \hat q)^{*} \cF \psi(\sqrt \lambda \hat q) \mathrm{d}\hat q.
    \end{align*}
    Since $\mathcal{F}(M_{\sqrt \chi} \phi)$ and $\mathcal{F}(M_{\sqrt
      \chi} \psi)$ are in $H^{s}(\R^{d})$, by Lemma~\ref{lem:trace_hardy}
    $D_{\phi\psi}$ is in $H^{s}(\R)$.

    We can compute by contour integration the inverse Fourier
    transform of the function $\frac{1}{z - \cdot}$ for ${\rm Im} z > 0$:
    \begin{align*}
      \frac 1 {2\pi} \int \frac{1}{z-\lambda} e^{-i\lambda \tau} \mathrm{d}\lambda = i \theta(-\tau) e^{-iz \tau}
    \end{align*}
    Therefore, by the Parseval formula,
    \begin{align*}
      \langle \phi, (z+\Delta)^{-1} M_{\chi} \psi\rangle = 2\pi i\int_{\R^{+}} e^{iz\tau} \widecheck{D}_{\phi\psi}(\tau) \mathrm{d}\tau.
    \end{align*}
    Letting $g_{\tau}(z) = e^{i z \tau}$, it follows from
    \begin{align*}
      |g_{\tau}^{(k)}(z_{1}) - g_{\tau}^{(k)}(z_{2})| \lesssim |z_{1}-z_{2}|^{\alpha} |\tau|^{k+\alpha}
    \end{align*}
    and the Cauchy-Schwarz inequality that $\langle \phi,
    (z-\Delta)^{-1} M_{\chi} \psi\rangle$ is $C^{k,\alpha}$ on $(0,+\infty)+i[0,+\infty)$.
\end{proof}

For $\lambda > 0$, we denote by
\begin{align*}
  (\lambda+i0^{+} + \Delta)^{-1} = \lim_{\eta \to 0^{+}} (\lambda+i\eta + \Delta)^{-1}
\end{align*}
the boundary value of the free resolvent. Its action can be explicitly
computed using the spectral representation \eqref{eq:spectral_repr}
and the Plemelj-Sokhotski formula \eqref{eq:plemelj}. Note in
particular that it differs from $(\lambda+i0^{-} + \Delta)^{-1}$ by
the sign of its anti-hermitian part.

\begin{proposition}[Limiting absorption principle for $H= - \Delta+V$]
\label{prop:LAP_pop}

Let $s = \tfrac 1 2 + k + \alpha$ for $k \in \N$ and $\alpha \in
[0,1]$. Let $V : \R^{d} \to \R$ be a continuous potential
such that $\langle x \rangle^{2s+\epsilon} V$ is bounded, for some
$\varepsilon > 0$. The resolvent $(z-H)^{-1}$ defined for
${\rm Im }z > 0$ extends to an operator of class
$C^{k,\alpha}$ on
the semi-open set $(0,+\infty) + i [0,+\infty)$, in the topology of
bounded operators from $L^{2}\left(\japx^{s}\right)$ to
$H^{2}\left(\japx^{-s}\right)$.
\end{proposition}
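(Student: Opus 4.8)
The plan is to deduce the weighted-space regularity of $(z-H)^{-1}$ from the free case (Proposition~\ref{prop:LAP_free}) by a limiting-absorption perturbation argument in the spirit of Agmon. Writing $R_{0}(z)=(z+\Delta)^{-1}$, the resolvent identity gives, wherever the right-hand side makes sense, $(z-H)^{-1}=R_{0}(z)\bigl(I-VR_{0}(z)\bigr)^{-1}$. So it suffices to invert $I-S(z)$, where $S(z):=VR_{0}(z)$ acts on the fixed Hilbert space $L^{2}(\japx^{s})$, and to control the regularity of this inverse in $z$ up to the boundary $(0,+\infty)+i\{0\}$. By Proposition~\ref{prop:LAP_free}, $R_{0}(z)$ is $C^{k,\alpha}$ from $L^{2}(\japx^{s})$ to $H^{2}(\japx^{-s})$; and since the decay hypothesis that $\japx^{2s+\eps}V$ is bounded makes multiplication by $V$ a bounded map $H^{2}(\japx^{-s})\to L^{2}(\japx^{s})$, the composition $S(z)$ is a $C^{k,\alpha}$ family of bounded operators on $L^{2}(\japx^{s})$.

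First I would establish that $S(z)$ is compact. Writing $\japx^{s}Vu=(\japx^{2s}V)\,\japx^{-s}u$ and using that $m:=\japx^{2s}V$ is bounded and tends to $0$ at infinity (here the extra $\eps>0$ is what matters), multiplication by $V$ is compact from $H^{2}(\japx^{-s})$ to $L^{2}(\japx^{s})$: a bounded sequence in $H^{2}(\japx^{-s})$ converges strongly in $L^{2}$ on balls by Rellich, while the tails are uniformly small because $m\to0$. Composing with the bounded $R_{0}(z)$, the operator $S(z)$ is compact, so $I-S(z)$ is a compact perturbation of the identity and a $C^{k,\alpha}$ version of the analytic Fredholm theorem applies: wherever $I-S(z)$ is invertible, the inverse is again $C^{k,\alpha}$ in $z$, since operator inversion is smooth on its domain and the chain rule preserves $C^{k,\alpha}$ regularity of a $C^{k,\alpha}$ family.

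It remains to prove invertibility of $I-S(z)$ on all of $(0,+\infty)+i[0,+\infty)$. For $\Im z>0$ this is immediate, as $z\notin\sigma(H)$ and $(z-H)^{-1}$ is bounded on $L^{2}$. The crux is the boundary $z=\lambda+i0^{+}$, $\lambda>0$, and this is where I expect the main difficulty. Suppose $g\in L^{2}(\japx^{s})$, $g\neq0$, solves $g=S(\lambda+i0^{+})g$, and set $u:=R_{0}(\lambda+i0^{+})g\in H^{2}(\japx^{-s})$, so that $g=Vu$ and $(\lambda+\Delta)u=Vu$, i.e. $Hu=\lambda u$, with $u$ satisfying the outgoing radiation condition inherited from the $+i0^{+}$ boundary value. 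Because $V$ is real, $\langle g,u\rangle=\langle Vu,u\rangle=\int V|u|^{2}$ is real, hence $\Im\langle g,R_{0}(\lambda+i0^{+})g\rangle=0$; by the Plemelj--Sokhotski formula \eqref{eq:plemelj}, the imaginary part of the free boundary resolvent is (a positive constant times) the integral of $|\cF g|^{2}$ over the energy sphere $\{|q|^{2}=\lambda\}$, so the trace of $\cF(Vu)$ on that sphere must vanish.

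This vanishing trace is precisely what removes the slowly decaying outgoing tail of $u$: via the explicit free resolvent together with a Rellich-type uniqueness argument, it upgrades $u$ from $H^{2}(\japx^{-s})$ to $L^{2}(\R^{d})$. Then $u$ is a genuine $L^{2}$-eigenfunction of $H$ with eigenvalue $\lambda>0$; since $H$ has no eigenvalues embedded in $[0,+\infty)$ (as recalled after Assumption~\ref{assump:decay_V}), we conclude $u=0$ and hence $g=Vu=0$, a contradiction. Thus $I-S(\lambda+i0^{+})$ is injective, and being a compact perturbation of the identity it is invertible. Combining interior and boundary invertibility with the Fredholm regularity yields that $(z-H)^{-1}=R_{0}(z)(I-S(z))^{-1}$ is $C^{k,\alpha}$ from $L^{2}(\japx^{s})$ to $H^{2}(\japx^{-s})$ on the whole semi-open set. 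The delicate step, demanding the most care, is the bootstrap from the vanishing sphere-trace to $u\in L^{2}$ together with the invocation of the absence of positive embedded eigenvalues; by comparison, the compactness of $S(z)$ and the transfer of $C^{k,\alpha}$ regularity through Fredholm inversion are routine.
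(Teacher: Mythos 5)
Your strategy coincides with the paper's own proof (which follows Agmon): factorize the resolvent through the free one, use Proposition~\ref{prop:LAP_free} plus the decay of $V$ to obtain a $C^{k,\alpha}$ family of compact perturbations of the identity, reduce boundary invertibility to injectivity via the Fredholm alternative, kill a putative kernel element by the Plemelj--Sokhotski vanishing-trace computation, and conclude from the absence of positive embedded eigenvalues. Working with $I-VR_{0}(z)$ on $L^{2}(\japx^{s})$ rather than, as the paper does, with $B(z)=1-R_{0}(z)V$ on $H^{2}(\japx^{-s})$ is an immaterial transposition of the same identity, and your compactness and $C^{k,\alpha}$-transfer arguments match the paper's.

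There is, however, a genuine gap at exactly the step you yourself flag as delicate: the passage from ``the trace of $\cF(Vu)$ vanishes on the sphere $|q|^{2}=\lambda$'' to ``$u\in L^{2}(\R^{d})$''. You assert this follows ``via the explicit free resolvent together with a Rellich-type uniqueness argument'', but that is a black box, not an argument, and it is the technical core of the proposition. The paper fills it with a dedicated Hardy-type inequality (Lemma~\ref{lem:trace_hardy}): if $\cF(Vu)\in H^{\sigma}(\R^{d})$, $\sigma>0$, vanishes on the sphere in the sense of traces, then division by $\lambda-|q|^{2}$ costs only one Sobolev exponent, so $\cF u = \cF(Vu)/(\lambda-|q|^{2})$ gives $u\in H^{2}(\japx^{\sigma-1})$. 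Crucially, a single application does \emph{not} reach $L^{2}$: starting from $u\in L^{2}(\japx^{-s})$ one only gets $u\in H^{2}(\japx^{s-1})$, and one must iterate, gaining $2s-1>0$ in the weight exponent at each step --- this is precisely where the strict inequality $s>\tfrac12$, i.e.\ the decay hypothesis on $\japx^{2s+\eps}V$, enters --- until the exponent becomes nonnegative and $u\in H^{2}(\R^{d})$. Note also that the classical Rellich uniqueness theorem is not quite the right pointer: it yields that an outgoing solution with vanishing far-field is zero \emph{outside a compact set}, and then requires a unique continuation principle to conclude $u\equiv 0$; that heavier route would bypass, rather than use, the embedded-eigenvalue theorem you invoke. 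As written, your proof has a hole exactly where the work is, and closing it requires either proving the Hardy-type division lemma and running the bootstrap, or importing the full Rellich-plus-unique-continuation machinery.
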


\begin{proof}
  
We use the following resolvent inequality:
\begin{equation}
  \label{eq:LAP_V_res_eq}
  (z-H)^{-1} = B(z)^{-1} (z+\Delta)^{-1}
\end{equation}
with
\begin{align*}
  B(z) = 1-(z+\Delta)^{-1} V,
\end{align*}
valid for $z \in \C$ with ${\rm Im }z > 0$. Since $V$ is bounded from
$H^{2}(\japx^{-s})$ to $L^{2}(\japx^{s})$, it follows from Proposition
\ref{prop:LAP_free} that $B(z)$ extends to an operator of class
$C^{k,\alpha}$ on the semi-open set $(0,+\infty) + i [0,+\infty)$, in
the topology of bounded operators on $H^{2}\left(\japx^{-s}\right)$.

We will show that for all $\lambda > 0$, $B(\lambda+i0^{+})$ is
invertible on $H^{2}\left(\japx^{-s}\right)$. This shows that
$B(z)^{-1}$ is $C^{k,\alpha}$ on the semi-open set in the topology of
bounded operators on $H^{2}\left(\japx^{-s}\right)$, which implies our
result by \eqref{eq:LAP_V_res_eq} and Proposition \ref{prop:LAP_free}.

Let $\lambda > 0$. Since $\japx^{2s+\varepsilon} V$ is bounded, the
multiplication operator $V$ is compact from $H^2(\langle x
\rangle^{-s})$ to $L^2(\langle x \rangle^s)$. It follows that
$(\lambda+i0^{+}+\Delta)^{-1} V $ is compact on
$H^{2}\left(\japx^{-s}\right)$. By the Fredholm alternative, it is then
enough to show that there are no non-zero solutions of
\begin{align*}
  u = (\lambda+i0^{+}+\Delta)^{-1} V u
\end{align*}
in $H^{2}\left(\japx^{-s}\right)$. Let $u \in
H^{2}\left(\japx^{-s}\right)$ be such a non-zero solution. Testing this equality
against $V u$ and taking imaginary parts, we obtain from the
Plemelj-Sokhotski formula \eqref{eq:plemelj} that
\begin{align*}
  0 = \Im( \langle Vu , u \rangle ) = {\rm Im}\langle  Vu, (\lambda+i0^{+}+\Delta)^{-1} V u \rangle = -\frac{\pi}{2\sqrt{\lambda}} \int_{|q|=\sqrt{\lambda}} |\mathcal{F}(Vu)(q)|^2 \, \mathrm{d}q.
\end{align*}
By Lemma~\ref{lem:trace_hardy}, 
\begin{align*}
  \mathcal{F}u(q) = \frac{\mathcal{F}(Vu)(q)}{\lambda - |q|^2}
\end{align*}
with $\cF(V u) \in H^{s}(\R^{d})$ shows that $\langle  q \rangle^{2} \cF u(q) \in H^{s-1}(\R^{d})$, and so that $u
\in H^{2}(\japx^{s-1})$. More generally, the argument above shows that
if $u \in L^{2}(\japx^{s'})$ with $s' \ge -s$, then $u \in H^{2}(\japx^{s'+2s-1})$, and,
since $s > \tfrac 1 2$, it follows that $u \in H^{2}(\R^{d})$, and
therefore that $\lambda$ is a positive embedded eigenvalue, which is impossible.
\end{proof}

\begin{proof}[Proof of Theorem~\ref{thm:main1}]
  By Theorem~\ref{thm:Kubo}, the response function is given by
  \begin{equation}
         \widehat K(\omega) = \lim_{\eta \to 0^{+}} \lela \psi_{0}, V_{\mathcal O} \Big(\omega +i\eta - (H-E_0)\Big)^{-1} V_{\mathcal P} \psi_{0}\rira - \lela \psi_{0},V_{\mathcal P} \Big(\omega +i\eta + (H-E_0)\Big)^{-1} V_{\mathcal O} \psi_{0}\rira.
  \end{equation}
  By the exponential localization of the ground state wave function
  $\psi_0$, and the assumptions on the potentials $V_{\mathcal O}$ and
  $V_{\mathcal P}$, $V_{\mathcal O}\psi_0$ and $V_{\mathcal P}\psi_0$
  belong to every $L^2(\japx^s)$ for $s \in \R$. 
  Since by Assumption~\ref{assump:decay_V} the function
  $\japx^{2+\epsilon} V$ is bounded, the result follows by Proposition
  \ref{prop:LAP_pop} in the case $k=0, \alpha=1$.
\end{proof}

\section{Truncation in space}
\label{sec:truncation}
Consider the domain $\Omega_{L} = [-L,L]^{d}$ with Dirichlet boundary
conditions. We define $\widetilde{H}_{L}$ the operator $-\Delta + V$
with domain
$D(\widetilde H_{L}) = \{\widetilde \psi \in H^{2}(\Omega_{L}),
\widetilde \psi|_{\partial \Omega_{L}} = 0\}$, self-adjoint on
$L^{2}(\Omega_{L})$. This operator is bounded from below and has
compact resolvent.

We now define the operator $H_{L}$ on $L^{2}(\R^{d})$ in the following
way: if $\psi \in L^{2}(\R^{d})$ and $\psi|_{\Omega_{L}} \in D(\widetilde H_{L})$,
then
\begin{align*}
  ({H_{L}} \psi)|_{\Omega_{L}} = \widetilde H_{L} \psi|_{\Omega_{L}},
\end{align*}
and $({H_{L}} \psi)|_{\R^{d} \setminus \Omega_{L}} = 0$. This defines an operator on
$L^{2}(\R^{d})$, self-adjoint with domain
$D(H_{L}) = L^{2}(\R^{d} \setminus \Omega_{L}) \oplus D(\widetilde H_{L})$, and with
spectrum $\sigma(\widetilde H_{L}) \cup \{0\}$. Let $\psi_{0,L}$ be an
$L^2$-normalized eigenvector associated to the lowest eigenvalue of
$H_L$.

Note that by adapting the proof in Lemma \ref{lem:growth-dynamics},
the estimates shown there for $e^{-i t H}$ on $L^{2}(\R^{d})$ are also
valid for $e^{-i t \widetilde H_{L}}$ on $L^{2}(\Omega_{L})$, with
constants independent of $L$.
Similarly, the estimates of
Lemma~\ref{lem:resolvent} for $(z-H)^{-1}$ on $L^{2}(\R^{d})$ and
$L^{2}(e^{\alpha \japx})$ are also valid for
$(z-\widetilde H_{L})^{-1}$ on $L^{2}(\Omega_{L})$ and
$L^{2}(e^{\alpha \japx}; \Omega_{L}) = \{\psi, e^{\alpha \japx} \psi
\in L^{2}(\Omega_{L})\}$ with natural norms, still with constants
independent of $L$.

We can now define $K_{L}$ analogously to $K$:
  \begin{align}
    \label{eq:kubo_time_L}
    K_{L}(\tau) &= -i \theta(\tau) \lela V_{\mathcal O} \psi_{0,L},  e^{-i (H_L-E_{0,L}) \tau} V_{\mathcal P} \psi_{0,L}\rira + {\rm c.c.},
  \end{align}
and
  \begin{align}
    \label{eq:K_frequency_L}
    \widehat K_{L}(\omega) = \lim_{\eta \to 0^{+}} &\lela \psi_{0,L}, V_{\mathcal O} \Big(\omega +i\eta - (H_{L}-E_{0,L})\Big)^{-1} V_{\mathcal P} \psi_{0,L}\rira\\\notag -& \lela \psi_{0,L},V_{\mathcal P} \Big(
    \omega +i\eta + (H_{L}-E_{0,L})\Big)^{-1} V_{\mathcal O} \psi_{0,L}\rira.
  \end{align}
  The operator $H$ and $H_{L}$ have the same action, but $H$ has domain
  $D(H) = H^{2}(\R^{d})$, whereas $H_{L}$ has domain
  $D(H_{L}) = \{\psi \in L^{2}(\R^{d}), \psi|_{\Omega_{L}} \in
  H^{2}(\Omega_{L}), \psi|_{\partial \Omega_{L}} = 0\}$. These different
  domains do not even share a common core, making the direct
  comparison of $K_{L}$ and $K$ difficult. However, we will prove and
  use the fact that, when evaluated on localized quantities, their
  resolvents
  \begin{align}
    R(z) = (z-H)^{-1}, \quad R_{L}(z) = (z-H_{L})^{-1}
  \end{align}
  and propagators $e^{-i H t}$ and $e^{-i H_{L} t}$, both defined on
  $L^{2}(\R^{d})$, are close. To that end, we let $\chi : \R^{d} \to \R$ be a smooth truncation
  function equal to $1$ for $|x|_{\infty} \le 1/4$ and to 0 for
  $|x|_{\infty} \ge 3/4$, and
\begin{align*}
  \chi_{L}(x) = \chi(x/L).
\end{align*}
Note that, as a multiplication operator, $\chi_{L}$ maps
$D(H) \cup D(H_{L})$ to $D(H) \cap D(H_{L})$.

Furthermore, this truncation is
exponentially accurate on exponentially localized functions: by direct
computation, for all $k \in \N$ there is $C_{k} > 0$ such that, for
all $0 \le \alpha_{1} \le \alpha_{2} \le 1$, for all
$\psi \in H^{k}(e^{\alpha_{2} \japx})$
\begin{align*}
  \|(1-\chi_{L})\psi\|_{H^{k}(e^{\alpha_{1} \japx})} = \|e^{(\alpha_{1}-\alpha_{2}) \japx} (1-\chi_{L}) e^{\alpha_{2} \japx} \psi\|_{H^{k}(\R^{d})} \le C_{k} e^{-(\alpha_{2}-\alpha_{1}) L} \|\psi\|_{H^{k}(e^{\alpha_{2} \japx})}.
\end{align*}

\begin{lemma}
  \label{lem:resolvent_convergence}
  There are $c > 0, C > 0$ such that, for all $z \in \mathbb C$ and
  $L > 0$ such that $d(K, \sigma(H)) \ge g$ and
  $\liminf d(K, \sigma(H_{L})) \ge g$ with $g > 0$, for all
  $0 \le \alpha \le \alpha' \le c g$,
  \begin{align*}
    \|R_{L}(z) - R(z)\|_{L^{2}(e^{\alpha' \japx}) \to L^{2}(e^{\alpha \japx})} \le C \left( 1+\frac 1 {g}\right)^{2} (1+|z|)^{3} e^{-(\alpha'-\alpha)L}.
  \end{align*}
\end{lemma}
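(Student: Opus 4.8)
The plan is to compare the two resolvents through a geometric (Combes--Thomas type) resolvent identity built from the cutoff $\chi_{L}$, exploiting that $\chi_{L}$ maps $D(H)\cup D(H_{L})$ into $D(H)\cap D(H_{L})$ and that its derivatives are supported far from the origin. Write $u=R(z)\psi$. Since $H$ and $H_{L}$ have the same differential action and $V$ commutes with multiplication by $\chi_{L}$, the commutator
\begin{align*}
  [H,\chi_{L}] = [-\Delta,\chi_{L}] = -(\Delta\chi_{L}) - 2\nabla\chi_{L}\cdot\nabla
\end{align*}
is a \emph{first-order} differential operator whose coefficients are supported in the annulus $A_{L}=\{L/4\le |x|_{\infty}\le 3L/4\}$ and are $O(1/L)$. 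Because $\chi_{L}u\in D(H_{L})$ and vanishes near $\partial\Omega_{L}$, applying $(z-H_{L})$ gives
\begin{align*}
  (z-H_{L})(\chi_{L}u) = \chi_{L}(z-H)u - [H,\chi_{L}]u = \chi_{L}\psi - [H,\chi_{L}]R(z)\psi.
\end{align*}
Applying $R_{L}(z)$ and inserting the two splittings $\psi=\chi_{L}\psi+(1-\chi_{L})\psi$ and $R(z)\psi=\chi_{L}R(z)\psi+(1-\chi_{L})R(z)\psi$ then yields the working identity
\begin{align*}
  R_{L}(z)\psi - R(z)\psi = R_{L}(z)[H,\chi_{L}]R(z)\psi + R_{L}(z)(1-\chi_{L})\psi - (1-\chi_{L})R(z)\psi.
\end{align*}

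The last two terms are the easy ones, and both produce the desired $e^{-(\alpha'-\alpha)L}$ by combining one weighted resolvent bound with the exponential accuracy of the truncation displayed before the statement (with $k=0$, $\alpha_{1}=\alpha$, $\alpha_{2}=\alpha'$). For $(1-\chi_{L})R(z)\psi$, I would first use Lemma~\ref{lem:resolvent} to place $R(z)\psi$ in $L^{2}(e^{\alpha'\japx})$, then apply the exponential accuracy estimate to gain $e^{-(\alpha'-\alpha)L}$ when lowering the weight from $e^{\alpha'\japx}$ to $e^{\alpha\japx}$. For $R_{L}(z)(1-\chi_{L})\psi$ I reverse the order: first localize $(1-\chi_{L})\psi$ to gain $e^{-(\alpha'-\alpha)L}$, then apply the $L^{2}(e^{\alpha\japx})$-boundedness of $R_{L}(z)$, which holds with constants independent of $L$ by the remark following the definition of $H_{L}$. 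Each term is then bounded by $C(1+1/g)(1+|z|)^{m}e^{-(\alpha'-\alpha)L}\|\psi\|_{L^{2}(e^{\alpha'\japx})}$ for a low power $m$.

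The bulk of the work is the first term $R_{L}(z)[H,\chi_{L}]R(z)\psi$. Applying $R_{L}(z)$ as a bounded operator on $L^{2}(e^{\alpha\japx})$ reduces matters to estimating $\|[H,\chi_{L}]R(z)\psi\|_{L^{2}(e^{\alpha\japx})}$. Since the coefficients of $[H,\chi_{L}]$ live in $A_{L}$, where $\japx\gtrsim L$, the weight $e^{\alpha\japx}$ there is smaller than $e^{\alpha'\japx}$ by a factor $e^{-(\alpha'-\alpha)L}$ (up to the geometric constant from the annulus, absorbed exactly as in the exponential-accuracy inequality); this localization is what produces the exponential gain in the main term. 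What remains is $\|R(z)\psi\|_{H^{1}(e^{\alpha'\japx})}$, so I must control one spatial derivative of $R(z)\psi$ in the weighted space. This is the crux of the argument: it forces the use of the $H^{2}$-valued weighted resolvent bound of Lemma~\ref{lem:resolvent} rather than its mere $L^{2}$ version, and it is responsible for the extra power of $(1+|z|)$ relative to the boundary terms. Tracking the $g$- and $|z|$-dependence through the two weighted resolvent norms and the derivative loss in the commutator then assembles into the stated prefactor $(1+1/g)^{2}(1+|z|)^{3}$.

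The main obstacle, accordingly, is controlling $\nabla R(z)\psi$ in the exponentially weighted space while keeping explicit track of the $g$ and $|z|$ dependence; a secondary point requiring care is that every weighted estimate invoked for $R_{L}(z)$ (both on $L^{2}(e^{\alpha\japx})$ and, through the analogue for $\widetilde H_{L}$, in its $H^{2}$ form) holds \emph{uniformly in $L$}, and that the domain bookkeeping used in the identity---in particular $\chi_{L}R(z)\psi\in D(H_{L})$---is valid, which is exactly the mapping property of $\chi_{L}$ recorded before the statement.
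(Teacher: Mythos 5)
Your proposal is correct and follows essentially the same route as the paper: the paper's identity $(R(z)-R_{L}(z))\psi=(R(z)-R_{L}(z))\,R(z)^{-1}(1-\chi_{L})R(z)\psi$, once $(z-H)(1-\chi_{L})R(z)\psi$ is expanded, is algebraically exactly your decomposition $R_{L}(z)[H,\chi_{L}]R(z)\psi+R_{L}(z)(1-\chi_{L})\psi-(1-\chi_{L})R(z)\psi$, and both arguments then rest on the same two ingredients, namely the $L$-uniform weighted (Combes--Thomas) resolvent bounds of Lemma~\ref{lem:resolvent} for $H$ and $H_{L}$ and the exponential gain coming from the supports of $1-\chi_{L}$ and $\nabla\chi_{L}$. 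The only difference is presentational: the paper never writes the commutator, absorbing it into the estimate $\|(z-H)f\|_{L^{2}(e^{\alpha\japx})}\lesssim(1+|z|)\|f\|_{H^{2}(e^{\alpha\japx})}$ applied to $f=(1-\chi_{L})R(z)\psi$, which costs one extra power of $(1+|z|)$ and is why your count $(1+|z|)^{2}(1+1/g)^{2}$ comes in under the stated $(1+|z|)^{3}(1+1/g)^{2}$.
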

\begin{proof}
  
  Because of the aforementioned domain issues, we cannot directly use the resolvent
  formula $R(z) - R_{L}(z) = R_{L}(z) (H - H_{L}) R(z)$. However, we can approximate any $\psi \in L^{2}(\R^{d})$ by
  $R(z)^{-1} \chi_{L} R(z) \psi$, for which
  \begin{align*}
    (R(z) - R_{L}(z)) \, R(z)^{-1} \chi_{L} R(z) \psi = R_{L}(z) (H-H_{L}) \chi_{L} R(z) \psi = 0,
  \end{align*}
  where we have used that $H \phi = H_{L} \phi$ for all
  $\phi \in D(H_{L}) \cap D(H)$. Therefore, using the estimates of Lemma
  \ref{lem:resolvent} for both $H$ and $H_{L}$,
  \begin{align*}
    \|(R(z) - R_{L}(z)) \psi\|_{L^{2}(e^{\alpha \japx})} &= \|(R(z)-R_{L}(z)) (R(z)^{-1} (1-\chi_{L}) R(z) \psi)\|_{L^{2}(e^{\alpha \japx})}\\
    &\lesssim \left( 1+ \frac 1 {d(z, \sigma(H))} + \frac 1 {d(z, \sigma(H_{L}))}\right)(1+|z|) \|R(z)^{-1} (1-\chi_{L}) R(z) \psi\|_{L^{2}(e^{\alpha \japx})}\\
    &\lesssim \left( 1+\frac 1 {d(z, \sigma(H))} + \frac 1 {d(z, \sigma(H_{L}))}\right) (1+|z|)^{2} \|(1-\chi_{L}) R(z) \psi\|_{H^{2}(e^{\alpha \japx})}\\
    &\lesssim \left( 1+\frac 1 {d(z, \sigma(H))} + \frac 1 {d(z, \sigma(H_{L}))}\right)^{2} (1+|z|)^{3} e^{-(\alpha'-\alpha)L} \|\psi\|_{L^{2}(e^{\alpha' \japx})}.
  \end{align*}
\end{proof}

Using this we can compare the eigenpairs of $H_{L}$ and $H$.
\begin{lemma}
  There are $C, \alpha_{1}, \alpha_{2} > 0$ such that, for all $L$ large enough,
  \begin{align}
    \label{eq:conv_exp_E}
    |E_{0,L} - E_{0}| &\le  C e^{-\alpha_{0} L}\\
    % \|R_{L} - R\|_{L^{2}(e^{\alpha \japx}) \to L^{2}(e^{\alpha \japx})} &\lesssim \frac 1 {d(z, \sigma(H))^{2}} e^{-\alpha L} \qquad \forall z \in \mathbb C, 0 < d(z, \sigma(H))\\
    \label{eq:conv_exp_psi}
    \|{\psi_{0,L}} - \psi_{0}\|_{L^{2}(e^{\alpha_{1} \japx})} &\le C e^{-\alpha_{2} L}
  \end{align}
  where the sign of $\psi_{0,L}$ is chosen such that $\langle
  {\psi_{0,L}}, \psi_{0} \rangle\ge 0$, and where $\alpha_{0}$ is the
  constant in Lemma \ref{lem:psi0_exp_loc}.
\end{lemma}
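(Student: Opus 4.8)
The plan is to compare the ground states of $H$ and $H_L$ through their Riesz spectral projectors, using Lemma~\ref{lem:resolvent_convergence} to control the difference of resolvents along a fixed contour. Let $g_{0} = \mathrm{dist}(E_{0}, \sigma(H) \setminus \{E_{0}\}) > 0$ be the spectral gap above the (simple) ground state, and let $\Gamma$ be the positively oriented circle of center $E_{0}$ and radius $g_{0}/2$. On $\Gamma$ one has $d(z, \sigma(H)) \ge g_{0}/2$ and $|z|$ bounded. The idea is that $P = \tfrac{1}{2\pi i} \oint_{\Gamma} R(z)\,\rd z$ is the rank-one orthogonal projector onto $\psi_{0}$, and that $P_{L} = \tfrac{1}{2\pi i} \oint_{\Gamma} R_{L}(z)\,\rd z$ will be the rank-one projector onto $\psi_{0,L}$, with $P_{L}$ exponentially close to $P$ in the weighted topology.

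The first and hardest step is to pin down the spectrum of $H_{L}$ near $E_{0}$: I must show that, for $L$ large, $\Gamma$ encloses exactly one eigenvalue of $H_{L}$ and stays uniformly away from $\sigma(H_{L})$, so that $P_{L}$ is well defined and Lemma~\ref{lem:resolvent_convergence} applies with a gap $g$ independent of $L$. This is precisely the \emph{absence of spectral pollution} near the bottom of the spectrum, and I would rule it out by two-sided variational estimates. By Dirichlet bracketing, the form domain $H^{1}_{0}(\Omega_{L})$ extends by zero into $H^{1}(\R^{d})$ with the same quadratic form, so the min-max values of $\widetilde H_{L}$ dominate those of $H$; hence $E_{0,L} \ge E_{0}$ and the second min-max value of $\widetilde H_{L}$ is $\ge \mu_{2}(H) = E_{0} + g_{0}$, which lies outside $\Gamma$. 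The spurious eigenvalue $0$ of $H_{L}$ also lies outside $\Gamma$, since $g_{0} \le -E_{0}$ gives $g_{0}/2 < -E_{0}$. For the matching upper bound I would use $\chi_{L}\psi_{0}$ as a trial function: since $\psi_{0} \in H^{2}(e^{\alpha_{0}\japx})$ by Lemma~\ref{lem:psi0_exp_loc} and $1-\chi_{L}$ is exponentially small on exponentially localized functions, its Rayleigh quotient equals $E_{0} + O(e^{-cL})$, so min-max yields $E_{0,L} \le E_{0} + C e^{-\alpha_{0} L}$. Together these bounds establish \eqref{eq:conv_exp_E}, show that $\Gamma$ encloses exactly one eigenvalue of $H_{L}$, and give $d(\Gamma, \sigma(H_{L})) \ge g_{0}/4$ for $L$ large.

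Next I would transfer the resolvent estimate to the projectors. Taking $g = g_{0}/4$, Lemma~\ref{lem:resolvent_convergence} gives, uniformly for $z \in \Gamma$ and $0 \le \alpha \le \alpha' \le c g$, the bound $\|R_{L}(z) - R(z)\|_{L^{2}(e^{\alpha'\japx}) \to L^{2}(e^{\alpha\japx})} \lesssim e^{-(\alpha'-\alpha)L}$. Integrating over the fixed contour $\Gamma$ immediately yields $\|P_{L} - P\|_{L^{2}(e^{\alpha'\japx}) \to L^{2}(e^{\alpha\japx})} \lesssim e^{-(\alpha'-\alpha)L}$.

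Finally I would recover the eigenfunction estimate. Set $\phi_{L} = P_{L}\psi_{0}$; since $\Ran P_{L} = \Span(\psi_{0,L})$, one has $\phi_{L} = \lambda_{L} \psi_{0,L}$ for a scalar $\lambda_{L}$. Applying the projector bound to $\psi_{0} \in L^{2}(e^{\alpha_{0}\japx})$ (with $\alpha' \le \alpha_{0}$) gives $\|\phi_{L} - \psi_{0}\|_{L^{2}(e^{\alpha\japx})} = \|(P_{L}-P)\psi_{0}\|_{L^{2}(e^{\alpha\japx})} \lesssim e^{-(\alpha_{0}-\alpha)L}$. In particular $\|\phi_{L}\|_{L^{2}} \to 1$, and using $\langle \phi_{L}, \psi_{0}\rangle \to 1 > 0$ together with $\langle \psi_{0,L}, \psi_{0}\rangle \ge 0$, the scalar $\lambda_{L} = \|\phi_{L}\|_{L^{2}}$ is positive for $L$ large, so $\psi_{0,L} = \phi_{L}/\|\phi_{L}\|_{L^{2}}$. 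Writing
\[
  \psi_{0,L} - \psi_{0} = \frac{(\phi_{L} - \psi_{0}) + (1 - \|\phi_{L}\|_{L^{2}})\psi_{0}}{\|\phi_{L}\|_{L^{2}}}
\]
and bounding $|1 - \|\phi_{L}\|_{L^{2}}| \le \|\phi_{L} - \psi_{0}\|_{L^{2}} \le \|\phi_{L} - \psi_{0}\|_{L^{2}(e^{\alpha\japx})}$, every term is exponentially small, giving \eqref{eq:conv_exp_psi} with $\alpha_{1} = \alpha$ and a suitable $\alpha_{2}$. The one genuinely delicate ingredient remains the uniform spectral separation of $H_{L}$ handled in the second step; the rest is a routine Riesz-projector argument in weighted spaces.
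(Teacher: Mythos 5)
Your proof is correct, and it follows the paper's strategy for the first half: the eigenvalue bound \eqref{eq:conv_exp_E} via the trial function $\chi_{L}\psi_{0}$ and the variational principle, and the uniform spectral gap for $H_{L}$ via min-max, are exactly the paper's argument (you are in fact more careful than the paper in explicitly checking that the spurious eigenvalue $0$ of $H_{L}$, coming from the exterior region, stays outside the contour, and that $E_{0,L}$ is simple). Where you genuinely diverge is the eigenfunction estimate \eqref{eq:conv_exp_psi}. The paper only extracts a \emph{scalar, unweighted} quantity from the contour integral, namely $1-\lela \psi_{0},\psi_{0,L}\rira^{2} = \frac{1}{2\pi i}\oint_{\mathcal C}\lela \psi_{0},(R(z)-R_{L}(z))\psi_{0}\rira \,\rd z$, which yields only $\|\psi_{0,L}-\psi_{0}\|_{L^{2}(\R^{d})}\lesssim e^{-\alpha_{2}L/2}$; it then upgrades to the weighted norm by a separate bootstrap through the eigenvalue equation, writing $V=V_{\eps}+V_{c}$ with $V_{c}$ compactly supported, $\|V_{\eps}\|_{L^{\infty}}\le -E_{0}/2$, and using $\psi_{0,L}=(E_{0,L}-H_{L,\eps})^{-1}V_{c}\psi_{0,L}$ together with the weighted convergence of $(E_{0,L}-H_{L,\eps})^{-1}$ and the fact that $V_{c}$ (being compactly supported) converts $L^{2}$ closeness into weighted closeness. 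You instead integrate the weighted operator bound of Lemma~\ref{lem:resolvent_convergence} along $\Gamma$ to get $\|P_{L}-P\|_{L^{2}(e^{\alpha'\japx})\to L^{2}(e^{\alpha\japx})}\lesssim e^{-(\alpha'-\alpha)L}$ and conclude directly by normalizing $P_{L}\psi_{0}$; this makes the paper's entire potential-splitting bootstrap unnecessary and is shorter. The trade-offs are minor: your route requires interpreting the contour integral as a vector-valued integral in the weighted space (the paper only ever integrates scalars), and your admissible weight is capped by $\alpha'\le c\,g_{0}/4$, i.e.\ tied to the spectral gap above $E_{0}$ through the constraint of Lemma~\ref{lem:resolvent_convergence}, whereas the paper's bootstrap produces a weight governed by the distance $-E_{0}/2$ to the essential spectrum, independent of that gap. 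Since the lemma only claims existence of some $\alpha_{1},\alpha_{2}>0$, both arguments fully prove the statement.
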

\begin{proof}

  Since by Lemma \ref{lem:psi0_exp_loc} $\psi_{0} \in H^{2}(e^{\alpha_{0} \japx})$,
  \begin{align*}
    \|(1-\chi_{L})\psi_{0}\|_{H^{2}(\R^{d})} \lesssim e^{-\alpha_{0} L}.
  \end{align*}
  and \eqref{eq:conv_exp_E} follows from the variational principle
  \begin{align*}
    E_{0} \le E_{0,L} \le \frac{\langle\chi_{L} \psi_{0}, H \chi_{L} \psi_{0}\rangle_{L^{2}(\R^{d})}}{\langle\chi_{L} \psi_{0}, \chi_{L} \psi_{0}\rangle_{L^{2}(\R^{d})}} \le E_{0} + C e^{-\alpha_{0} L}.
  \end{align*}
  for some $C > 0$.

  Let $E_{1,L}$ and $E_{1}$ be the second-lowest eigenvalue (or zero if
  there are no second eigenvalue) of $H_{L}$ and $H$ respectively. From the
  min-max principle, $E_{1,L} \ge E_{1}$ and therefore for $L$ large
  enough there is a gap $g > 0$ in $\sigma(H_{L})$ above $E_{0,L}$.
  Let $C$ be the circle with center $E_{0}$ and radius $g/2$ in the
  complex plane, oriented trigonometrically. Then, by Lemma
  \ref{lem:resolvent_convergence} there is
  $\alpha_{2} > 0$ such that
  \begin{align*}
    1 - \langle \psi_{0}, \psi_{0,L} \rangle^{2} &= \left\langle  \psi_{0}, \Big( |\psi_{0}\rangle\langle \psi_{0}| -|\psi_{0,L}\rangle\langle \psi_{0,L}|  \Big)\psi_{0} \right\rangle\\
    &= \frac 1 {2\pi i} \oint_{\mathcal C} \left\langle  \psi_{0}, (R(z) - R_{L}(z))\psi_{0} \right\rangle dz\\
    |1 - \langle \psi_{0}, \psi_{0,L} \rangle^{2}| &\lesssim e^{-\alpha_{2} L}
  \end{align*}
  Then
  \begin{align*}
    \frac 1 2 \|\psi_{0} - \psi_{0,L}\|^{2}_{L^{2}(\R^{d})} = 1 - \langle\psi_{0},\psi_{0,L}\rangle = 1 - \sqrt{\langle\psi_{0},\psi_{0,L}\rangle^{2}} \lesssim e^{-\alpha_{2} L}.
  \end{align*}

  Now, as in Lemma \ref{lem:psi0_exp_loc}, let
  $V = V_{\varepsilon} + V_{c}$ with $V_{c}$ compactly supported and
  $\|V_{\varepsilon}\|_{L^{\infty}(\R^{d})} \le -E_{0}/2$. Let
  $H_{L,\varepsilon} = -\Delta + V_{\varepsilon}$ on $\Omega_{L}$,
  extended as before to act on $L^{2}(\Omega)$. For
  $L$ large enough so that the support of $V_{c}$ is contained in
  $\Omega_{L}$, we have
  \begin{align*}
    \psi_{0,L} &= (E_{0,L}-H_{L,\varepsilon})^{-1} V_{c} \psi_{0,L}.
  \end{align*}
  Arguing as in Lemma \ref{lem:resolvent_convergence}, there are
  $\alpha_{1},\alpha' > 0$ such that
  $(E_{0,L}-H_{L,\varepsilon})^{-1}$ converges exponentially quickly
  to $(E_{0}-H)^{-1}$ as an operator from $L^{2}(e^{\alpha' \japx})$ to
  $L^{2}(e^{\alpha_{1} \japx})$. Furthermore, because $V_{c}$ is compactly supported, we have
  \begin{align*}
    \|V_{c} \psi_{0,L} - V_{c} \psi_{0}\|_{L^{2}(e^{\alpha' \japx})} \lesssim \|\psi_{0,L} - \psi_{0}\|_{L^{2}(\R^{d})} \lesssim e^{-\alpha_{2} L}
  \end{align*}
  and the result follows.
\end{proof}

With this we can now prove the convergence of $K_{L}(\omega+i\eta)$ for
positive $\eta$.
\begin{theorem}
  There are $\alpha_{3} > 0, C > 0$ such that for all
  $0 < \eta < 1$, $\omega \in \R$,
  \begin{align*}
    |\widehat K_{L}(\omega + i\eta) - \widehat K(\omega + i\eta)| \le \frac{C(1+\omega)^{3}}{\eta^{2}}e^{-\alpha_{3} \eta L}
  \end{align*}
  
\end{theorem}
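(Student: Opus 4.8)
The plan is to express both $\widehat K_{L}(\omega+i\eta)$ and $\widehat K(\omega+i\eta)$ through the resolvents $R(z)=(z-H)^{-1}$ and $R_{L}(z)=(z-H_{L})^{-1}$ and to reduce everything to Lemma~\ref{lem:resolvent_convergence}. Writing $z_{+}=\omega+i\eta+E_{0}$ and $\zeta=E_{0}-\omega-i\eta$, together with their $L$-analogues $z_{+,L},\zeta_{L}$ obtained by replacing $E_{0}$ by $E_{0,L}$, the quantity $\widehat K(\omega+i\eta)$ is a sum of two terms of the form $\langle V_{\mathcal O}\psi_{0}, R(z_{+}) V_{\mathcal P}\psi_{0}\rangle$ and $\langle V_{\mathcal P}\psi_{0}, R(\zeta) V_{\mathcal O}\psi_{0}\rangle$, and similarly for $\widehat K_{L}$ with $\psi_{0,L}, R_{L}, z_{+,L},\zeta_{L}$; the two terms are treated identically, so I focus on the first. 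The crucial observation is that $\mathrm{Im}\,z_{+}=\eta$ and $\mathrm{Im}\,\zeta=-\eta$, while $H$ and $H_{L}$ are self-adjoint with real spectrum, so $d(z_{+},\sigma(H))\ge\eta$ and $d(z_{+,L},\sigma(H_{L}))\ge\eta$; I may therefore take $g=\eta$ in Lemma~\ref{lem:resolvent_convergence}. I decompose the difference by telescoping into three groups: (i) eigenfunction-swap terms $\langle V_{\mathcal O}(\psi_{0,L}-\psi_{0}), R_{L}(z_{+,L}) V_{\mathcal P}\psi_{0,L}\rangle$ and $\langle V_{\mathcal O}\psi_{0}, R_{L}(z_{+,L}) V_{\mathcal P}(\psi_{0,L}-\psi_{0})\rangle$; (ii) the eigenvalue-shift term $\langle V_{\mathcal O}\psi_{0}, (R_{L}(z_{+,L})-R_{L}(z_{+})) V_{\mathcal P}\psi_{0}\rangle$; and (iii) the genuine resolvent-difference term $\langle V_{\mathcal O}\psi_{0}, (R_{L}(z_{+})-R(z_{+})) V_{\mathcal P}\psi_{0}\rangle$.

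Term (iii) is the dominant one. By Lemma~\ref{lem:psi0_exp_loc} and the sublinear growth of $V_{\mathcal O},V_{\mathcal P}$ (Assumption~\ref{assump:observables_and_perturb}), both $V_{\mathcal O}\psi_{0}$ and $V_{\mathcal P}\psi_{0}$ lie in every weighted space $L^{2}(e^{\alpha\japx})$, with norms bounded uniformly for $\alpha\le\alpha_{0}$. Applying Lemma~\ref{lem:resolvent_convergence} with $g=\eta$, $\alpha=0$ and $\alpha'=c\eta$ (legitimate since $\eta<1$ forces $\alpha'<\alpha_{0}$ once $c$ is small enough) and bounding $|z_{+}|\lesssim 1+|\omega|$, I obtain
\[
  |\langle V_{\mathcal O}\psi_{0}, (R_{L}(z_{+})-R(z_{+})) V_{\mathcal P}\psi_{0}\rangle| \lesssim \Big(1+\tfrac{1}{\eta}\Big)^{2}(1+|\omega|)^{3} e^{-c\eta L} \lesssim \frac{(1+|\omega|)^{3}}{\eta^{2}}\, e^{-\alpha_{3}\eta L},
\]
which is exactly the claimed bound.

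The remaining terms are subdominant. For (ii), the resolvent identity gives $R_{L}(z_{+,L})-R_{L}(z_{+})=(E_{0}-E_{0,L}) R_{L}(z_{+,L}) R_{L}(z_{+})$; using $|E_{0,L}-E_{0}|\lesssim e^{-\alpha_{0}L}$ from \eqref{eq:conv_exp_E} and the $L^{2}$-resolvent bounds $\|R_{L}\|\le 1/\eta$ yields a contribution $\lesssim e^{-\alpha_{0}L}/\eta^{2}$. For (i), I bound by a weighted Cauchy--Schwarz pairing, using $\|\psi_{0,L}-\psi_{0}\|_{L^{2}(e^{\alpha_{1}\japx})}\lesssim e^{-\alpha_{2}L}$ from \eqref{eq:conv_exp_psi}, the uniform exponential localization of $\psi_{0,L}$ inherited from this convergence, and the weighted resolvent bound $\|R_{L}(z_{+,L})\|_{L^{2}(e^{\alpha\japx})\to L^{2}(e^{\alpha\japx})}\lesssim (1+|\omega|)/\eta$ from Lemma~\ref{lem:resolvent} (valid for $\alpha\le c\eta$); this gives a contribution $\lesssim (1+|\omega|) e^{-\alpha_{2}L}/\eta$. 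Since $\eta<1$, choosing $\alpha_{3}\le\min(\alpha_{0},\alpha_{2})$ makes both $e^{-\alpha_{0}L}$ and $e^{-\alpha_{2}L}$ bounded by $e^{-\alpha_{3}\eta L}$, so (i) and (ii) are absorbed into the bound from (iii).

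The main obstacle is the sharp tracking of the interplay between the spectral distance and the weight: because $g=\eta$ is small, Lemma~\ref{lem:resolvent_convergence} forces the decay rate in $L$ to degrade to $c\eta$ and the prefactor $(1+1/\eta)^{2}$ to blow up like $1/\eta^{2}$. Verifying that the single small weight $\alpha'=c\eta$ is simultaneously compatible with the localization of $V_{\mathcal O}\psi_{0}$, $V_{\mathcal P}\psi_{0}$ and $\psi_{0,L}$, and that the two \emph{easy} error sources (eigenvalue and eigenfunction convergence, decaying at the fixed rates $\alpha_{0},\alpha_{2}$) are indeed dominated by the $\eta$-degraded rate $\alpha_{3}\eta$, is the delicate bookkeeping that makes the final constants $\alpha_{3}$ and $C$ come out uniform in both $\eta$ and $\omega$.
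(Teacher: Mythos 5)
Your proposal is correct and follows essentially the same route as the paper: both reduce $\widehat K_L(\omega+i\eta)-\widehat K(\omega+i\eta)$ to resolvent differences $R_L(z)-R(z)$ tested against exponentially localized vectors, handle the $E_{0,L}\to E_0$ and $\psi_{0,L}\to\psi_0$ errors via the uniform $1/\eta$ resolvent bound and the exponential convergence estimates, and conclude with Lemma~\ref{lem:resolvent_convergence} applied with spectral gap $g=\eta$ (which is what produces the $\eta$-degraded rate $e^{-\alpha_3\eta L}/\eta^2$). The paper states this reduction in one compressed paragraph; your explicit telescoping into eigenfunction-swap, eigenvalue-shift, and resolvent-difference terms is simply a more detailed write-up of the same argument.
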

\begin{proof}
  Since $\psi_{0,L}$ converges exponentially towards $\psi_{0}$ in
  $L^{2}(e^{\alpha \japx})$ for some $\alpha > 0$, and $V_{\mathcal O}$
  and $V_{\mathcal P} $ have at most polynomial growth, $V_{\mathcal O}  \psi_{0,L}$
  and $V_{\mathcal P} \psi_{0,L}$ converge exponentially quickly in
  $L^{2}(\R^{d})$ towards $V_{\mathcal O}  \psi_{0}$ and $V_{\mathcal P} \psi_{0}$
  respectively. $E_{0,L}$ converges exponentially towards $E_{0}$ and
  $\Big(\omega +i\eta - (H_{L}-E_{0,L})\Big)^{-1}$ is uniformly
  bounded by $1/\eta$ as an operator on $L^{2}(\R^{d})$. It therefore
  follows that we can reduce to terms of the
  form
\begin{align*}
  \left\langle  \psi_{l}, \Big(R_{L}(\omega-E_{0}+i\eta) - R(\omega-E_{0}+i\eta)\Big) \psi_{r}\right\rangle
\end{align*}
with $\psi_{l/r} \in L^{2}(e^{\alpha \japx})$ for some $\alpha > 0$
independent on $\omega, \eta$. We can then conclude using Lemma \ref{lem:resolvent_convergence}.
\end{proof}

Finally, we conclude the proof of Theorem \ref{thm:main_2}.
\begin{theorem}
  $K_{L}$ converges as a tempered distribution towards $K$.
\end{theorem}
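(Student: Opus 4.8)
The plan is to work directly in the time domain, testing against Schwartz functions and exploiting the causality and uniform boundedness of $K$ and $K_L$. (Working in frequency via the preceding theorem is obstructed because its bound $\tfrac{C(1+\omega)^3}{\eta^2}e^{-\alpha_3\eta L}$ degenerates as $\eta\to0^+$, which is precisely the regime needed to recover the boundary distribution.) First I would observe that, since $e^{-i(H_L-E_{0,L})\tau}$ is unitary and, by \eqref{eq:conv_exp_psi} together with Lemma \ref{lem:psi0_exp_loc}, the functions $V_{\mathcal O}\psi_{0,L}$ and $V_{\mathcal P}\psi_{0,L}$ are bounded in $L^2(\R^d)$ uniformly in $L$ (the polynomial growth of $V_{\mathcal O},V_{\mathcal P}$ being absorbed by the uniform exponential localization of $\psi_{0,L}$), the definition \eqref{eq:kubo_time_L} yields a uniform bound $|K_L(\tau)|\le C$ for all $\tau$ and all large $L$; the same holds for $K$ via \eqref{eq:kubo_time}. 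Thus $K_L$ and $K$ are causal bounded functions, hence tempered distributions, and it suffices to show that $\int_0^\infty\big(K_L(\tau)-K(\tau)\big)g(\tau)\,\mathrm d\tau\to0$ for every $g\in\mathcal S(\R)$.

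Given $\epsilon>0$ I would split this integral at a time $T$. On the tail $\tau>T$ the uniform bound gives $\int_T^\infty|K_L-K|\,|g|\le 2C\int_T^\infty|g|$, which is below $\epsilon$ for $T$ large, uniformly in $L$, because $g$ is Schwartz. It then remains to control $\int_0^T|K_L-K|\,|g|\le T\|g\|_\infty\,\sup_{[0,T]}|K_L-K|$ for this fixed $T$, i.e. to prove that $K_L\to K$ uniformly on $[0,T]$. Inserting intermediate terms into the difference of correlation functions, I would reduce $\sup_{[0,T]}|K_L-K|$ to three contributions: the replacement of $\psi_{0,L}$ by $\psi_0$ in bra and ket, controlled by $\|\psi_{0,L}-\psi_0\|_{L^2(e^{\alpha_1\japx})}$ through \eqref{eq:conv_exp_psi}; the replacement of the phase $e^{iE_{0,L}\tau}$ by $e^{iE_0\tau}$, controlled on $[0,T]$ by $T|E_{0,L}-E_0|$ through \eqref{eq:conv_exp_E}; and the genuinely dynamical term $\langle V_{\mathcal O}\psi_0,(e^{-iH_L\tau}-e^{-iH\tau})V_{\mathcal P}\psi_0\rangle$.

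The heart of the argument, and the main obstacle, is the propagator convergence $\sup_{\tau\in[0,T]}\|(e^{-iH_L\tau}-e^{-iH\tau})\phi\|\to0$ for the localized datum $\phi=V_{\mathcal P}\psi_0\in H^2(e^{\alpha\japx})$. Since $H$ and $H_L$ share no common core, I would mimic the cutoff trick of Lemma \ref{lem:resolvent_convergence} in time: because $\chi_L$ maps $D(H)\cup D(H_L)$ into $D(H)\cap D(H_L)$, on which $H$ and $H_L$ act identically, differentiating $s\mapsto e^{-iH_L(\tau-s)}\chi_L e^{-iHs}\phi$ gives the Duhamel identity
\begin{align*}
  \chi_L e^{-iH\tau}\phi - e^{-iH_L\tau}\chi_L\phi = i\int_0^\tau e^{-iH_L(\tau-s)}\,[-\Delta,\chi_L]\,e^{-iHs}\phi\,\mathrm ds,
\end{align*}
where $[-\Delta,\chi_L]=-(\Delta\chi_L)-2\nabla\chi_L\cdot\nabla$ is supported in the annulus $\{L/4\le|x|_\infty\le 3L/4\}$ with coefficients of size $O(1/L)$. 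Bounding $\mathbf{1}_{|x|_\infty\ge L/4}\le C_N L^{-N}\japx^N$ and using the polynomial moment estimates of Lemma \ref{lem:growth-dynamics} for $e^{-iHs}\phi$ and its gradient — which grow only polynomially in $s\le\tau\le T$ — I would obtain that the integrand is $O_N(L^{-N})$ for every $N$, hence that $\|\chi_L e^{-iH\tau}\phi - e^{-iH_L\tau}\chi_L\phi\|$ is $O_N(L^{-N})$ uniformly on $[0,T]$. Combining this with the exponential smallness of $\|(1-\chi_L)\phi\|$ and the $O_N(L^{-N})$ smallness of $\|(1-\chi_L)e^{-iH\tau}\phi\|$, again from Lemma \ref{lem:growth-dynamics}, gives the desired propagator convergence.

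Assembling the three contributions shows $\sup_{[0,T]}|K_L-K|\to0$, so $\int_0^T|K_L-K|\,|g|\to0$ for fixed $T$; together with the uniform tail bound this yields $\limsup_L\big|\int(K_L-K)g\big|\le\epsilon$ for every $\epsilon>0$, proving the distributional convergence. I expect the only delicate point to be the dynamical term, namely controlling the mass of $e^{-iHs}\phi$ on the far annulus uniformly for $s\in[0,T]$, which is exactly where the moment estimates of Lemma \ref{lem:growth-dynamics} enter. Since only qualitative convergence is required, the polynomial-in-$L$ decay coming from moments is amply sufficient, and no exponential localization of the evolved state is needed.
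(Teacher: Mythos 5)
Your proposal is correct and follows essentially the same route as the paper: test in the time domain against Schwartz functions, reduce via the eigenpair convergence \eqref{eq:conv_exp_E}--\eqref{eq:conv_exp_psi} to a comparison of the two propagators on a localized datum, and control that comparison by a Duhamel/commutator argument with the cutoff $\chi_{L}$, using the moment estimates of Lemma~\ref{lem:growth-dynamics}.

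Two remarks on the details. First, your Duhamel identity interpolates $s \mapsto e^{-iH_{L}(\tau-s)}\chi_{L}e^{-iHs}\phi$, producing a single commutator term acting only on the full-space evolution $e^{-iHs}\phi$; the paper instead evolves $\chi_{L}V_{\mathcal P}\psi_{0}$ under both dynamics and gets two commutator terms, $[\chi_{L},H]\phi$ and $[\chi_{L},H_{L}]\phi_{L}$, so it also needs the previously noted fact that the estimates of Lemma~\ref{lem:growth-dynamics} hold for $e^{-i\widetilde H_{L}t}$ with $L$-independent constants. Your one-sided version avoids that ingredient, a small simplification. Second, your claim that the commutator term is $O_{N}(L^{-N})$ for every $N$ is not justified as stated: using $\mathbf{1}_{|x|_{\infty}\ge L/4}\le C_{N}L^{-N}\japx^{N}$ requires $N$-th moments of $e^{-iHs}\phi$ and of $\nabla e^{-iHs}\phi$, whereas Lemma~\ref{lem:growth-dynamics} only controls moments up to order two. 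What the lemma does give, pairing one factor $1/L$ from $\nabla\chi_{L}$ with $\mathbf{1}_{|x|_{\infty}\ge L/4}\lesssim (1+|x|)/L$ and the first and mixed moments, is $\|[\Delta,\chi_{L}]e^{-iHs}\phi\|\lesssim (1+s^{4})/L$, exactly the $P(t)/L$ bound of the paper; since your argument only needs the quantity to vanish as $L\to\infty$, the overstatement is harmless. Likewise, your $\epsilon$-splitting of the time integral at $T$ is valid but unnecessary: once the $O(\mathrm{poly}(t)/L)$ bound is in hand, it integrates directly against the Schwartz test function, which is how the paper concludes.
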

\begin{proof}
  We will prove that, for all $f \in \mathcal S(\R)$,
  \begin{align*}
    \int_{0}^{\infty} \lela V_{\mathcal O} \psi_{0,L}, e^{-i(H_{L}-E_{0,L})t} V_{\mathcal P} \psi_{0,L}\rira f(t) dt \rightarrow \int_{0}^{\infty} \lela V_{\mathcal O} \psi_{0}, e^{-i(H-E_{0})t} V_{\mathcal P} \psi_{0}\rira f(t) dt.
  \end{align*}
  Since $\psi_{0,L} \to \psi_{0}$ in $H^{2}(e^{\alpha_{1} \japx})$,
  \begin{align*}
    \|V_{\mathcal O} \psi_{0,L} - \chi_{L} V_{\mathcal O} \psi_{0}\| \to 0
  \end{align*}
  and similarly for $V_{\mathcal P}$. It is therefore sufficient to prove that
  \begin{align*}
    \|\chi_{L} (e^{-iHt} - e^{-iH_{L}t}) \chi_{L} V_{\mathcal P} \psi_{0}\| \le \frac{P(t)}{L}
  \end{align*}
  for some polynomial $P$. Let
  \begin{align*}
    \phi(t) = e^{-iH t } \chi_{L} V_{\mathcal P} \psi_{0}, \quad \phi_{L}(t) = e^{-iH_{L} t } \chi_{L} V_{\mathcal P} \psi_{0}
  \end{align*}
  To estimate $\chi_{L}(\phi(t) - \phi_{L}(t))$ we compute
  \begin{align*}
    i\partial_{t} \chi_{L}(\phi - \phi_{L}) &= H \chi_{L} \phi - H_{L} \chi_{L} \phi_{L} + [\chi_{L}, H]\phi - [\chi_{L}, H_{L}] \phi_{L}\\
    &= H \chi_{L}(\phi - \phi_{L}) + [\chi_{L}, H]\phi - [\chi_{L}, H_{L}] \phi_{L}
  \end{align*}
  and therefore by the Duhamel formula
  \begin{align*}
    \chi_{L}(\phi(t)-\phi_{L}(t)) &= -i \int_{0}^{t} e^{-iH(t-t')} \left( [\chi_{L}, H] \phi(t') - [\chi_{L}, H_{L}] \phi_{L}(t') \right) dt'\\
    \|\chi_{L}(\phi(t)-\phi_{L}(t))\| &\le t \sup_{t' \in [0,t]}\left\| [\chi_{L}, H] \phi(t') - [\chi_{L}, H_{L}] \phi_{L}(t') \right\|.
  \end{align*}
  Since
  $[\chi_{L}, H] \phi = 2 \nabla\chi_{L} \cdot \nabla\phi + \Delta \chi_{L}\phi$ is
  zero for $|x|_{\infty} < L/4$, by Lemma~\ref{lem:growth-dynamics} we have
  \begin{align*}
    \|[\chi_{L}, H] \phi(t')\| &\lesssim \frac 1 L \|(1+|x|)[\chi_{L}, H] \phi(t')\|\\
    &\lesssim \frac 1 L  \left( \|x \otimes \nabla \phi(t')\| + \|\nabla \phi(t')\| + \|(1+|x|)\phi(t')\|\right)\\
    &\lesssim \frac{1+|t|^{4}}{L}
  \end{align*}
  and similarly with $[\chi_{L},H_{L}] \phi_{L}(t')$. The result follows.
\end{proof}

\section{Appendix: trace theory in Sobolev spaces}
We will need the following lemma on the regularity of traces on
surfaces with respect to variations of the surface.
\begin{lemma}
  \label{lem:trace}
  Let $\chi : \R \to \R$ be a smooth function with
  support $[R_{1}, R_{2}]$, with $R_{1} > 0$, and $s_{1}, s_{2}, s$
  nonnegative real numbers such that $s_{1} + s_{2} = s$. Then for all $u \in
  H^{s}(\R^{d})$, the function
  \begin{align*}
    r \mapsto (\hat x \mapsto \chi(r) u(r \hat x))
  \end{align*}
  is in $H^{s_{1}}(\R, H^{s_{2}}(S^{d-1}))$.
\end{lemma}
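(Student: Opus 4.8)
The plan is to reduce the statement to an elementary comparison between an \emph{isotropic} Sobolev norm on the product manifold $\R \times S^{d-1}$ and the \emph{anisotropic} norm defining $H^{s_{1}}(\R, H^{s_{2}}(S^{d-1}))$, after transporting $u$ into polar coordinates. The feature that makes this clean is the hypothesis $R_{1} > 0$: it confines everything to an annulus bounded away from the origin, where the polar map $\Phi : (r,\hat x) \mapsto r\hat x$ is a genuine diffeomorphism with non-degenerate Jacobian. Near $r = 0$ this would fail, which is exactly why the assumption is imposed.

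First I would localize. Choose a smooth $\tilde\chi : \R \to \R$ equal to $1$ on $[R_{1},R_{2}]$ and supported in $[R_{1}/2, 2R_{2}]$ (still away from $0$), and set $\tilde u(x) = \tilde\chi(\abs{x})\,u(x)$. Multiplication by a fixed smooth compactly supported function is bounded on $H^{s}(\R^{d})$ for every $s \ge 0$, so $\tilde u \in H^{s}(\R^{d})$ is supported in the annulus $A = \{R_{1}/2 \le \abs{x} \le 2R_{2}\}$. Next I transport to polar coordinates: on a neighborhood of $A$ the map $\Phi$ is a smooth diffeomorphism onto a neighborhood of $[R_{1}/2,2R_{2}] \times S^{d-1}$, whose derivatives and those of $\Phi^{-1}$ are bounded. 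Indeed the Euclidean metric pulls back to $\mathrm{d}r^{2} + r^{2}\,\mathrm{d}\sigma^{2}$, with $\mathrm{d}\sigma^{2}$ the round metric on $S^{d-1}$, and since $r$ ranges over a compact subset of $(0,\infty)$ this is uniformly comparable to the product metric $\mathrm{d}r^{2} + \mathrm{d}\sigma^{2}$. By the coordinate invariance of Sobolev spaces under such diffeomorphisms, $v := \tilde u \circ \Phi$ lies in the isotropic space $H^{s}(\R \times S^{d-1})$, which I define through $(1 - \partial_{r}^{2} - \Delta_{S^{d-1}})^{s/2}$. Because $\chi$ is supported in $[R_{1},R_{2}]$, where $\tilde\chi \equiv 1$, the target function is $w(r,\hat x) = \chi(r)\,u(r\hat x) = \chi(r)\,v(r,\hat x)$; multiplying by the $\hat x$-independent smooth compactly supported factor $\chi(r)$ again preserves $H^{s}(\R \times S^{d-1})$ and makes $w$ compactly supported in $r$.

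It remains to compare the two norms spectrally. Let $(Y_{\ell})$ be an orthonormal basis of spherical harmonics with $-\Delta_{S^{d-1}} Y_{\ell} = \lambda_{\ell} Y_{\ell}$, $\lambda_{\ell} \ge 0$, and let $\hat w_{\ell}(\xi)$ be the Fourier transform in $r$ of the $\ell$-th angular coefficient of $w$. Then, up to norm equivalence,
\[
  \norm{w}_{H^{s}(\R \times S^{d-1})}^{2} \simeq \int_{\R} \sum_{\ell} \bigl(1 + \xi^{2} + \lambda_{\ell}\bigr)^{s}\, \abs{\hat w_{\ell}(\xi)}^{2}\, \mathrm{d}\xi,
\]
whereas, by definition of the Bochner--Sobolev norm,
\[
  \norm{w}_{H^{s_{1}}(\R,\, H^{s_{2}}(S^{d-1}))}^{2} = \int_{\R} \sum_{\ell} \bigl(1 + \xi^{2}\bigr)^{s_{1}} \bigl(1 + \lambda_{\ell}\bigr)^{s_{2}}\, \abs{\hat w_{\ell}(\xi)}^{2}\, \mathrm{d}\xi.
\]
Since $s_{1}, s_{2} \ge 0$ and $s_{1} + s_{2} = s$, and since $1 + \xi^{2} \le 1 + \xi^{2} + \lambda_{\ell}$ and $1 + \lambda_{\ell} \le 1 + \xi^{2} + \lambda_{\ell}$, I get the pointwise bound $(1+\xi^{2})^{s_{1}}(1+\lambda_{\ell})^{s_{2}} \le (1+\xi^{2}+\lambda_{\ell})^{s}$. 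Hence the anisotropic norm is controlled by the isotropic one, giving $w \in H^{s_{1}}(\R, H^{s_{2}}(S^{d-1}))$ and completing the proof. Note this inequality uses the hypothesis $s = s_{1}+s_{2}$ exactly, and requires no interpolation.

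The hard part will be the second step, namely making the diffeomorphism-invariance argument fully rigorous for non-integer $s$: one must know that the intrinsic $H^{s}$-regularity of $\tilde u$ on the annulus $A \subset \R^{d}$ transfers to the isotropic product-manifold norm on $\R \times S^{d-1}$. For integer $s$ this is the chain rule combined with the uniform comparability of the two metrics; the general case follows by interpolation together with duality, or by invoking the standard coordinate invariance of Bessel-potential spaces under diffeomorphisms with uniformly bounded derivatives. Everything else -- the two cutoff multiplications and the final algebraic comparison of symbols -- is elementary once $R_{1} > 0$ has removed the coordinate singularity at the origin.
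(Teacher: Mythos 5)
Your proof is correct, but it takes a genuinely different route from the paper's. The paper first proves a flat, hyperplane version of the statement --- if $u \in H^{s}(\R^{d})$ then $x_{1} \mapsto u(x_{1},\cdot)$ lies in $H^{s_{1}}(\R, H^{s_{2}}(\R^{d-1}))$ --- by a direct Fourier computation resting on the inequality $\langle q_{1}\rangle^{2s_{1}}\langle q'\rangle^{2s_{2}} \le \langle q\rangle^{2s}$, and then transfers this to the sphere through a cover of the annulus by local spherical-coordinate charts $\Phi_{i}(r,\theta)=r\,\Theta_{i}(\theta)$ and a partition of unity; the key point there is that these charts respect the radial/angular splitting, so the flat anisotropic regularity passes to the Bochner space over the sphere chart by chart. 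You instead make one global change of variables to the cylinder $\R \times S^{d-1}$ (legitimate precisely because $R_{1}>0$) and perform the radial/angular splitting spectrally, via the Fourier transform in $r$ and spherical harmonics, through the symbol inequality $(1+\xi^{2})^{s_{1}}(1+\lambda_{\ell})^{s_{2}}\le(1+\xi^{2}+\lambda_{\ell})^{s}$ --- the same elementary algebra as in the paper's hyperplane step, just executed in different coordinates. What your route buys is the elimination of the partition-of-unity bookkeeping and of the separate flat lemma: the final comparison of the isotropic and anisotropic norms is one line. What it costs is a heavier black box: besides the invariance of fractional $H^{s}$ under diffeomorphisms with bounded derivatives (which the paper also invokes implicitly when it asserts that each localized piece $w_{i}$ extends to an $H^{s}(\R^{d})$ function), you also need that the chart-based and the spectral (Laplace--Beltrami) definitions of $H^{s}$ on $\R\times S^{d-1}$ and of $H^{s_{2}}(S^{d-1})$ coincide, a standard elliptic/pseudodifferential fact but one more ingredient than the paper uses. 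Once these standard facts are granted, both arguments are complete and at a comparable level of rigor.
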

\begin{proof}
  We first treat the case of the restriction to a hyperplane: if $u \in H^{s}(\R^{d})$, then
  \begin{align*}
    v_{u} : x_{1} \mapsto (x' \mapsto u(x_{1}, x'))
  \end{align*}
  is in $H^{s_{1}}(\R, H^{s_{2}}(\R^{d-1}))$. Indeed, denoting for clarity by $\mathcal F_{1}$
  the one-dimensional Fourier transform, we have by the Parseval
  formula on $L^{2}(\R^{d-1})$ that for all $q_{1} \in \R$,
  \begin{align*}
    \|\cF_{1} v_{u}(q_{1})\|_{H^{s_{2}}(\R^{d-1})}^{2} = \frac {1}{(2\pi)^{d-1}}\int_{\R^{d-1}} \langle q' \rangle^{2s_{2}}|\cF u(q_{1}, q')|^{2} \mathrm{d}q'
  \end{align*}
  and therefore
  \begin{align*}
    \|v_{u}\|^{2}_{H^{s_{1}}(\R, H^{s_{2}}(\R^{d-1}))} = \frac 1 {2\pi} \int_{\R} \langle q_{1}\rangle^{2s_{1}} \|\cF_{1} v_{u}(q_{1})\|_{H^{s_{2}}(\R^{d-1})}^{2} \mathrm{d}q_{1} \le \frac 1 {(2\pi)^{d}}\int_{\R^{d}} \langle  q \rangle^{2s}|\cF u(q)|^{2}  \mathrm{d}q = \|u\|_{H^{s}(\R^{d})}^{2}.
  \end{align*}
  To lift this property to the sphere $S^{d-1}(R)$ of radius $R$, we
  use a classical ``flattening'' argument. Using spherical
  coordinates, we can construct a cover of the annulus of inner radius
  $R_{1}$ and outer radius $R_{2}$ by open sets
  $\{X_{i}\}_{i=1,\dots,N}$ not touching zero with the property that,
  for every $i \in \{1, \dots, N\}$, there is a smooth diffeomorphism
  $\Phi_{i}$ from a an open set $\mathcal R_{i} \times \mathcal
  T_{i} \subset \R \times \R^{d-1}$ to
  $X_{i}$ such that, for all $(r,\theta) \in \mathcal R_{i} \times \mathcal
  T_{i}$,
  \begin{align*}
    \Phi_{i}(r,\theta) = r\, \Theta_{i}(\theta)
  \end{align*}
  with $\Theta_{i}$ having values on the sphere $S^{d-1}$.
  One can then construct a partition of the unity
  $\zeta_{i} : \R^{d} \to \R$ where, for each $i \in \{1, \dots, N\}$,
  $\zeta_{i}$ is supported inside $X_{i}$, and
  $\sum_{i=1}^{N} \zeta_{i} = 1$ on the annulus. Then, for all $u \in
  H^{s}(\R^{d})$, $|x| \in [R_{1},R_{2}]$,
  \begin{align*}
    \chi(|x|) u(x) = \sum_{i=1}^{N} \chi(|x|)\, \zeta_{i}(x)\, u(x) = \sum_{i, \; x \in X_{i}} w_{i}(\Phi_{i}^{-1}(x))
  \end{align*}
  where
  \begin{align*}
    w_{i}(r,\theta) = \chi(r)\, \zeta_{i}(r\Theta_{i}(\theta))\, u(r \Theta_{i}(\theta)),
  \end{align*}
  defined on $\mathcal R_{i} \times \mathcal
  T_{i}$, extends on the whole $\R^{d}$ to a  $H^{s}(\R^{d})$
  function. It follows from the hyperplane case that
  \begin{align*}
    \chi(r) u(r \hat x) = \sum_{i, \; x \in X_{i}} w_{i}\left(r, \Theta_{i}^{-1}(\hat x)\right)
  \end{align*}
  is in $H^{s_{1}}(\R, H^{s_{2}}(S^{d-1}))$.
\end{proof}
Note that from the fact that $H^{\frac 1 2 +\varepsilon}(\R)$, $\varepsilon>0$, functions are continuous, we
recover the classical trace theorem that traces of $H^{s+\frac 1 2+\varepsilon}(\R^{d})$
functions are $H^{s}$ on surfaces.

The proof of the limiting absorption principle for the nonzero
potential case requires the following Hardy-type inequality.
\begin{lemma}
    \label{lem:trace_hardy}
    Let $s>0$, and $u \in H^s(\R^d)$ such that $u$ is zero on
    the sphere of radius $a$ (in the sense of traces). Then the function
    $v(x) = \tfrac{u(x)}{|x|^{2}-a^{2}}$ is $H^{s-1}(\R^d)$.
\end{lemma}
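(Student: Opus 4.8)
The plan is to localize near the sphere $\{|x|=a\}$, where the only singularity sits, and to reduce the statement to a one-dimensional Hardy inequality in the radial variable. Fix a smooth cutoff $\chi$ equal to $1$ in a neighbourhood of the sphere and supported in an annulus $\{R_{1}\le|x|\le R_{2}\}$ with $0<R_{1}<a<R_{2}$, and split $u=\chi u+(1-\chi)u$. Factor $|x|^{2}-a^{2}=(|x|-a)(|x|+a)$. For the far piece, the function $m(x)=(1-\chi(x))/(|x|^{2}-a^{2})$ is smooth with all derivatives bounded: it vanishes identically near the sphere, is smooth near the origin (the denominator being then close to $-a^{2}$), and decays like $|x|^{-2}$ at infinity. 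Hence multiplication by $m$ preserves every $H^{s}(\R^{d})$, so $(1-\chi)u/(|x|^{2}-a^{2})=m\,u\in H^{s}\subset H^{s-1}$. Since $|x|+a$ is smooth and bounded below on $\operatorname{supp}\chi$, multiplication by $\chi/(|x|+a)$ is likewise harmless, and it remains to prove that $\chi u/(|x|-a)\in H^{s-1}(\R^{d})$. Throughout I will assume $s>1/2$, the only case in which the trace condition is meaningful; for $0<s\le 1/2$, where the trace is not even defined, the conclusion is separate and easier, since the radial singularity $1/(|x|-a)$ is itself of negative regularity below $-\tfrac12$.

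Next I would pass to polar coordinates $x=r\hat x$. The map $(r,\hat x)\mapsto r\hat x$ is a smooth diffeomorphism from $(R_{1},R_{2})\times S^{d-1}$ onto the open annulus, with the Euclidean metric uniformly equivalent there to the product metric; for functions supported in the annulus it therefore induces a two-sided equivalence between $H^{\sigma}(\R^{d})$ and $H^{\sigma}(\R\times S^{d-1})$ for every $\sigma$, closely related to the flattening used in Lemma~\ref{lem:trace}. Writing $w(r,\hat x)=\chi(r)\,u(r\hat x)$, the hypothesis $u|_{\{|x|=a\}}=0$ becomes the trace condition $w(a,\cdot)=0$ (which requires $s>1/2$), and the claim reduces to showing $w/(r-a)\in H^{s-1}(\R\times S^{d-1})$.

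On the cylinder I would decompose into spherical harmonics, $w(r,\hat x)=\sum_{k}w_{k}(r)\,Y_{k}(\hat x)$ with $-\Delta_{S^{d-1}}Y_{k}=\mu_{k}Y_{k}$. Using the joint functional calculus of $-\partial_{r}^{2}$ and $-\Delta_{S^{d-1}}$ together with $(1+\xi^{2}+\mu_{k})^{\sigma}\asymp(m_{k}^{2}+\xi^{2})^{\sigma}$, where $m_{k}=\langle\sqrt{\mu_{k}}\rangle$, one has $\|w\|_{H^{\sigma}(\R\times S^{d-1})}^{2}\asymp\sum_{k}\int_{\R}(m_{k}^{2}+\xi^{2})^{\sigma}|\widehat{w_{k}}(\xi)|^{2}\,\mathrm d\xi$. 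Since division by $r-a$ acts on each radial mode separately and $w_{k}(a)=0$ for all $k$, matters reduce to the family of one-dimensional estimates $\int(m_{k}^{2}+\xi^{2})^{s-1}|\widehat{w_{k}/(r-a)}|^{2}\lesssim\int(m_{k}^{2}+\xi^{2})^{s}|\widehat{w_{k}}|^{2}$, uniformly in $m_{k}\ge 1$. Translating $r\mapsto r+a$ and rescaling $r\mapsto r/m_{k}$ turns the weights $(m_{k}^{2}+\xi^{2})^{\sigma}$ into $\langle\eta\rangle^{2\sigma}$ and makes the powers of $m_{k}$ cancel identically on both sides; the entire family collapses to the single scale-fixed Hardy inequality
\begin{equation*}
  \|f/r\|_{H^{s-1}(\R)}\lesssim\|f\|_{H^{s}(\R)},\qquad f\in H^{s}(\R),\ f(0)=0 .
\end{equation*}

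It then remains to prove this scalar inequality. Using $f(0)=\tfrac{1}{2\pi}\int\widehat f=0$, a standard principal-value computation of $\mathcal F(1/x)=-i\pi\,\mathrm{sgn}$ yields the clean identity $\widehat{f/x}(\xi)=-i\int_{-\infty}^{\xi}\widehat f(\zeta)\,\mathrm d\zeta$, an antiderivative of $-i\widehat f$ that decays at both ends precisely because of the cancellation $\int\widehat f=0$. The bound $\|\langle\xi\rangle^{s-1}\widehat{f/x}\|_{L^{2}}\lesssim\|\langle\xi\rangle^{s}\widehat f\|_{L^{2}}$ splits into a trivial low-frequency part (where $|\widehat{f/x}(\xi)|\le\|\widehat f\|_{L^{1}}\lesssim\|f\|_{H^{s}}$, using $s>1/2$) and a high-frequency part governed by the weighted Hardy inequality $\int_{1}^{\infty}\xi^{2s-2}\big|\int_{\xi}^{\infty}\widehat f\big|^{2}\,\mathrm d\xi\lesssim\int_{1}^{\infty}\xi^{2s}|\widehat f|^{2}\,\mathrm d\xi$, whose Muckenhoupt constant $\big(\int_{1}^{r}\xi^{2s-2}\,\mathrm d\xi\big)\big(\int_{r}^{\infty}\xi^{-2s}\,\mathrm d\xi\big)\asymp(2s-1)^{-2}$ is finite for every $s>1/2$. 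I expect this last step to be the main obstacle: one must take the weighted Hardy estimate in its sharp form (a crude Cauchy--Schwarz bound loses a logarithm), and the vanishing-trace hypothesis enters here in an essential way, through the cancellation $\int\widehat f=0$ that is exactly what lets the antiderivative decay at infinity. The remaining delicate point is the two-way passage between the Euclidean norm on the annulus and the cylinder decomposition, together with uniformity in the spherical-harmonic index $k$ — a uniformity that the rescaling above renders automatic.
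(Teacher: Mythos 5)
Your proof is correct on its true scope $s>\tfrac{1}{2}$ (which is also the paper's implicit scope, and covers every use of the lemma, where $s\ge \tfrac32$), but it takes a genuinely different route. Both arguments share the same skeleton --- localize near the sphere, flatten, and reduce to a one-dimensional Fourier-side Hardy inequality in which the vanishing trace enters as the cancellation $\int\widehat f=0$ --- yet they diverge at the two key steps. The paper flattens \emph{locally}, via a partition of unity, to the hyperplane case $u(0,x')=0$, $v=u/x_1$, and never decomposes in the tangential variables: the pointwise weight inequality $\langle q\rangle^{2(s-1)}\lesssim\langle q_1\rangle^{2(s-1)}+\langle q'\rangle^{2(s-1)}$ (for $s\ge1$; a single term suffices for $s<1$) reduces everything to the one-parameter family of inequalities $\int|\mathcal F v|^2\langle q_1\rangle^{2(\alpha-1)}\,\mathrm{d}q_1\lesssim\int|\mathcal F u|^2\langle q_1\rangle^{2\alpha}\,\mathrm{d}q_1$, used only with $\alpha=s$ and $\alpha=1$ and proved by integration by parts plus Cauchy--Schwarz. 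You instead keep the sphere global, pass to the cylinder $\R\times S^{d-1}$, diagonalize in spherical harmonics, and recover uniformity in the spherical index by the rescaling $r\mapsto r/m_k$, under which (as you correctly verify) the powers of $m_k$ cancel identically; your single scalar inequality is then proved via the antiderivative identity for $\widehat{f/x}$ (the integral form of the paper's relation $\mathcal F u=-i\,\partial_{q_1}\mathcal F v$) and a Muckenhoupt weighted Hardy inequality. What each buys: the paper's weight-splitting avoids spherical harmonics and any uniformity question altogether, and, using only local charts, adapts verbatim to general smooth hypersurfaces (relevant to the extension to general dispersion relations mentioned in the remarks of Section 3); your version exploits the exact product structure of the annulus, the scaling trick is an elegant substitute for the weight splitting, and the Muckenhoupt computation tracks the sharp constant $\asymp(2s-1)^{-2}$, exhibiting the degeneration as $s\to\tfrac{1}{2}^{+}$ that the paper's proof leaves implicit. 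One small correction: drop the parenthetical claim that the case $0<s\le\tfrac{1}{2}$ is ``separate and easier'' --- there the trace hypothesis is void and $u/(|x|^2-a^2)$ need not even be locally integrable (take $u$ equal to $1$ near the sphere), so no statement of that kind can hold; like the paper's, your argument correctly treats $s>\tfrac{1}{2}$ as the entire content of the lemma.
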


\begin{proof}
  Using as before a smooth cutoff function and a partition of unity of a neighborhood of the sphere
  of radius $a$, it is enough to show that for $u\in H^s(\R^d)$
  with $u(0,x')=0$ for all $x' \in \R^{d-1}$, then
  $v(x) = \tfrac{u(x)}{x_{1}}$ is $H^{s-1}(\R^{d-1})$.
  
Proceeding by density, we can assume that $\mathcal F u \in
  C^{\infty}_{c}(\R^{d})$. Then, using the fact that $\int \mathcal F u
  (q_{1}, q') \mathrm{d}q_1 = 0$ for all $q' \in \R^{d-1}$, we have that $\mathcal F
  v  \in C^{\infty}_{c}(\R^{d})$, and  $\mathcal{F}u(q) = -i \tfrac{\partial \mathcal{F}v}{\partial
    q_1}(q)$. By integration by parts and the Cauchy-Schwarz
  inequality, we have the following Hardy inequality, for $q = (q_{1},
  q')$ and $\alpha \in \R$:
  \begin{equation*}
    \int_\R |\mathcal{F}v(q)|^2 \langle q_1\rangle^{2(\alpha-1)} \, \mathrm{d}q_1 \lesssim \int_\R |\mathcal{F}v(q)| |\mathcal F u(q)| \langle q_1\rangle^{(\alpha-1)+\alpha} \, \mathrm{d}q_1 \lesssim \int_\R {|\mathcal F u(q)|}^2 \langle q_1\rangle^{2\alpha}\, \mathrm{d}q_1.
  \end{equation*}
In the case $s \geq 1$, we have $\langle  q \rangle^{2(s-1)} \lesssim
\langle q_1\rangle^{2(s-1)} + \langle q' \rangle^{2(s-1)}$ and so
  \begin{align*}
      \|v\|^2_{H^{s-1}} &\lesssim \int_{\R^{d-1}} \int_\R |\mathcal{F}v(q)|^2 \left(\langle q_1\rangle^{2(s-1)} + \langle q' \rangle^{2(s-1)}\right) \, \mathrm{d}q_1 \mathrm{d}q' .
    \end{align*}
    By using the Hardy inequality with $\alpha = s$ for the first
    term and $\alpha = 1$ for the second, we get
    \begin{align*}
            \|v\|^2_{H^{s-1}} &\lesssim \int_{\R^d} |\mathcal F u(q)|^{2} \langle q_1 \rangle^{2s}\, \mathrm{d}q + \int_{\R^d} |\mathcal F u(q)|^{2} \langle q_1\rangle^{2}\langle q' \rangle^{2(s-1)}\, \mathrm{d}q \\
      &\lesssim \int_{\R^d} |\mathcal F u(q)|^{2} \langle q \rangle^{2s}\, \mathrm{d}q  \lesssim \|u\|_{H^s}^2.
  \end{align*}
  In the case $0 < s < 1$, we have $\langle  q \rangle^{2(s-1)} \le
\langle q_1\rangle^{2(s-1)}$ and we can repeat the above argument.
\end{proof}

\section{Appendix: locality estimates on resolvents and propagators}
We prove in this appendix results on the locality of the resolvents
and propagators of Schrödinger operators. This appendix is independent
from the rest of the paper.
\begin{lemma}[Properties of the propagator]
  \label{lem:growth-dynamics}
  Let $W : \R \times \R^{d} \to \R$ be a potential such that $W$ is
  continuous on $\R \times \R^{d}$ and for all $t \in \R$,
  $W(t,\cdot)$ is $C^{\infty}(\R^{d})$ and satisfies the sub-linear
  condition: for all $|\alpha| \ge 1$, $\partial_{x}^{\alpha} W$ is
  bounded on $\R \times \R^{d}$.

  There exists a unitary propagator $U(t,s)$ such that if
  $\psi_{s} \in \mathcal S(\R^{d})$, $U(t,s) \psi_{s} \in \mathcal S(\R^{d})$ satisfies
  the Schrödinger equation
  \begin{align*}
    i\partial_{t} U(t,s) \psi_{s} = (-\Delta + W(t))U(t,s) \psi_{s}.
  \end{align*}

  Furthermore, there is $C_{0} > 0$ not depending on $W$ such
  that, for all $t, s \in \R, \psi_{s} \in L^{2}(\R^{d})$,
    \begin{align}
      \label{eq:control_U1}
      \|xU(t,s) \psi_{s}\| + \|\nabla U(t,s)\psi_{s}\| &\le C_{1}(t,s) \left(\|x\psi_{s}\| + \|\nabla \psi_{s}\| + \|\psi_{s}\|\right)\\
      \label{eq:control_U2}
      \||x|^{2} U(t,s) \psi_{s}\| + \|\Delta U(t,s)\psi_{s}\| &\le C_{2}(t,s) \left(\||x|^{2}  \psi_{s}\| + \|\Delta \psi_{s}\| + \|x \otimes \nabla \psi_{s}\| + \|\psi_{s}\|\right)
    \end{align}
    where
    \begin{align*}
        C_{1}(t,s) &= C_{0} (1 + |t-s|)^{2}\left(1 + \sup_{t' \in [t,s]} |\nabla W(t')|\right)\\
        C_{2}(t,s) &= C_{0} (1 + |t-s|)^{4}\left(1 + \sup_{t' \in [t,s]} |\nabla W(t')|^{2} + \sup_{t' \in [t,s]} |\nabla^{2}W(t')|\right)
    \end{align*}
  \end{lemma}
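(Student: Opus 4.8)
The plan is to separate the existence statement from the two a priori estimates. For the existence of a unitary propagator $U(t,s)$ that maps $\mathcal S(\R^d)$ into itself and solves the Schrödinger equation on $\mathcal S$, I would invoke the construction of \cite{fujiwara1979construction}: under the stated sub-linear hypothesis (smoothness of $W$ with all derivatives of order $\ge 1$ bounded), $-\Delta + W(t)$ generates a strongly continuous unitary propagator preserving Schwartz space. This reduces the lemma to the bounds \eqref{eq:control_U1}--\eqref{eq:control_U2}, which I would prove by the commutator (Heisenberg) method. Throughout, I would first perform every computation for $\psi_s \in \mathcal S(\R^d)$, where $\psi(t) = U(t,s)\psi_s$ remains Schwartz so that all moments are finite and differentiable in $t$, and then extend to general $\psi_s$ with finite right-hand sides by density.

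For the first-order estimate, the key observation is that the relevant commutators close. One has $[H, x_j] = -2\partial_j$ and $[H, \partial_j] = -(\partial_j W)$, the latter a multiplication operator of norm $\le \sup|\nabla W|$. Writing the Heisenberg derivative $\tfrac{d}{dt}\big(U(s,t)\,A\,U(t,s)\big) = i\,U(s,t)[H,A]U(t,s)$ and using unitarity to identify $\|\partial_j\psi(t)\|$ and $\|x_j\psi(t)\|$ with the norms of the corresponding Heisenberg operators applied to $\psi_s$, I obtain $\tfrac{d}{dt}\|\partial_j\psi(t)\| \le \sup|\nabla W|\,\|\psi_s\|$, hence $\|\nabla\psi(t)\| \le \|\nabla\psi_s\| + |t-s|\sup|\nabla W|\,\|\psi_s\|$, and then $\tfrac{d}{dt}\|x_j\psi(t)\| \le 2\|\nabla\psi(t)\|$, which integrates to $\|x\psi(t)\| \lesssim \|x\psi_s\| + |t-s|\,\|\nabla\psi_s\| + |t-s|^2\sup|\nabla W|\,\|\psi_s\|$. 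Since $x$ does not feed back into $\nabla$, pure integration (no Grönwall) produces exactly the polynomial constant $C_1(t,s)$.

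For the second-order estimate I would iterate this scheme on the quadratic operators $\partial_j\partial_k$, $x_j\partial_k$ and $x_jx_k$. Again the commutators close: $[H, \partial_j\partial_k]$ yields only terms of the type $(\nabla W)\cdot\nabla$ and $\nabla^2 W$; $[H, x_j\partial_k] = -2\partial_j\partial_k - x_j(\partial_k W)$; and $[H, x_jx_k] = -2(x_j\partial_k + x_k\partial_j) - 2\delta_{jk}$. The resulting system for the second moments is \emph{triangular}: the pure-derivative moments are driven only by first-order moments and $\sup|\nabla^2 W|$; the mixed moments $\|x_j\partial_k\psi\|$ by the pure-derivative ones together with first-order position moments; and the pure-position moments $\|x_jx_k\psi\|$ by the mixed ones. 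Solving these in order by integration, inserting the first-order bounds at each stage, every integration contributes one power of $|t-s|$ while the coefficients accumulate as $\sup|\nabla W|^2$ (from two first-order factors) and $\sup|\nabla^2 W|$ (from the direct second-derivative term), giving $C_2(t,s) = C_0(1+|t-s|)^4\big(1 + \sup|\nabla W|^2 + \sup|\nabla^2 W|\big)$ as stated.

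The main obstacle is twofold. Technically, the delicate point is the rigorous justification of differentiating the moments in $t$ and of the Heisenberg equations; this rests entirely on the $\mathcal S(\R^d)$-invariance of $U(t,s)$ provided by the existence theory, which guarantees finiteness and smoothness in $t$ of all moments along Schwartz initial data, after which the general-data statement follows by density because every bound is expressed through norms of $\psi_s$. Combinatorially, the real work is the bookkeeping of the triangular second-order system, ensuring that the powers of $|t-s|$ and the orders of the derivatives of $W$ emerge precisely as in $C_2(t,s)$ rather than, say, exponentially in $t$ or with spurious factors of $\sup|\nabla^2 W|^{1/2}$.
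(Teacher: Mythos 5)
Your proposal is correct and follows essentially the same route as the paper: existence via Fujiwara's construction, then the commutator method applied to $\nabla$, $x$ and to $\nabla^2$, $x\otimes\nabla$, $x\otimes x$, exploiting the triangular structure so that plain integration (Duhamel in the paper, equivalent differential inequalities in your write-up) yields the polynomial constants $C_1$ and $C_2$. The only difference is presentational (integral Duhamel formula versus differentiated Heisenberg evolution), and your explicit remark about first working on Schwartz data and concluding by density is a point the paper leaves implicit.
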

  Note that these estimates are natural in the case $W=0$. In this
  case, $(\cF \psi)(t,q) = (\cF \psi)(s,q) e^{-i (t-s) |q|^{2}}$, and so $\partial_{q}
  (\cF \psi)(t,q) = \partial_{q} (\cF \psi)(s,q) e^{-i |q|^{2}(t-s)} - 2i
  q(t-s)(\cF
  \psi)(s,q) e^{-i |q|^{2}(t-s)}$, which is in $L^{2}(\R^{d})$ if $\psi_{s}
  \in L^{2}(\japx) \cap H^{1}(\R^{d})$.
  \begin{proof}
    The existence of the propagator is obtained using the results of
    \cite{fujiwara1979construction} (which actually only requires a
    sub-quadratic potential).
    
    We will obtain these inequalities by the following standard
    commutator method. Let $A$
    be an operator, and $\psi_{s} \in L^{2}(\R^{d})$. Then, if
    $\psi(t) = U(t,s) \psi_{s}$, we have
  \begin{align*}
    i\partial_{t} (A \psi)(t') = A H(t') \psi(t') = H(t') A\psi(t') + [A,H(t')] \psi(t')
  \end{align*}
  and therefore by Duhamel's formula,
  \begin{align*}
    A \psi(t) = U(t,s) A\psi(s) - i \int_{s}^{t} U(t,t') [A,H(t')] U(t',s)  dt'.
  \end{align*}
  
  We compute the following commutators
  \begin{align*}
    [\nabla, H(t)] &= \nabla W(t)\\
    [x, H(t)] &= 2\nabla
  \end{align*}
  Since $\nabla W$ is a bounded operator, we obtain
  \begin{align*}
    \|\nabla U(t,s)\psi_{s}\| &\le \|\nabla\psi_{s}\| + |t-s| \sup_{t' \in [s,t]} |\nabla W(t')| \|\psi_{s}\|\\
    \|x U(t,s) \psi_{s}\| &\le \|x \psi_{s}\| + 2 |t-s| \sup_{t' \in [s,t]} \|\nabla U(t',s)\psi_{s}\|\\
    &\le \|x \psi_{s}\| + 2 |t-s| \left(\|\nabla\psi_{s}\| + |t-s| \sup_{t' \in [s,t]} |\nabla W(t')| \|\psi_{s}\|\right)
  \end{align*}
  and \eqref{eq:control_U1} follows. Similarly, from the commutators
  \begin{align*}
    [\nabla^{2}, H(t)] &= \nabla^{2}W + 2(\nabla W(t)) \otimes \nabla\\
    [x\otimes \nabla, H(t)] &= -2 \nabla^{2} + x \otimes \nabla W(t)\\
    [x \otimes x, H(t)] &= 2x \otimes \nabla + 2 I
  \end{align*}
  we obtain \eqref{eq:control_U2}.
\end{proof}

\begin{lemma}[Properties of the resolvent]
  \label{lem:resolvent}
  Let $W : \R^{d} \to \R$ be a bounded function, and $R(z) = (z-(-\Delta+W))^{-1}$. Then there are $c >0, C > 0$ such that, for all $z \notin \sigma(H)$,
  \begin{align*}
    \|R(z)\|_{L^{2}(\R^{d}) \to H^{2}(\R^{d})} &\le C(1+|z|)\left(1+\frac {1}{d(z, \sigma(H))}\right)\\
    \|R(z)\|_{L^{2}(e^{\alpha\japx}) \to H^{2}(e^{\alpha\japx})} &\le C(1+|z|)\left(1+\frac {1}{d(z, \sigma(H))}\right) \quad \forall \alpha \le \alpha_{z} := c d(z, \sigma(H))
  \end{align*}
\end{lemma}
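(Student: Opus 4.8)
The plan is to prove the two estimates in turn, obtaining first the unweighted $L^{2}\to H^{2}$ bound and then deducing the weighted one by a Combes--Thomas conjugation. For the unweighted bound, since $H=-\Delta+W$ is self-adjoint the spectral theorem gives $\|R(z)\|_{L^{2}\to L^{2}}=1/d(z,\sigma(H))$. To control second derivatives I would use the equation itself: for $u=R(z)f$ the identity $(z-H)u=f$ rearranges to $\Delta u = f+(W-z)u$, so that, using that $W$ is bounded and $\|u\|_{L^{2}}\le\|f\|_{L^{2}}/d(z,\sigma(H))$,
\begin{align*}
  \|\Delta u\|_{L^{2}} \le \|f\|_{L^{2}} + \big(|z|+\|W\|_{L^{\infty}}\big)\frac{\|f\|_{L^{2}}}{d(z,\sigma(H))}.
\end{align*}
Since $\|u\|_{H^{2}}\lesssim\|u\|_{L^{2}}+\|\Delta u\|_{L^{2}}$, this gives the first bound after absorbing $\|W\|_{L^{\infty}}$ into $C$.

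For the weighted estimate I would conjugate by the weight $e^{\phi}$, $\phi=\alpha\japx$. Because $\nabla\phi=\alpha x/\japx$ and $\Delta\phi$ are bounded by $C\alpha$ while $|\nabla\phi|^{2}\le\alpha^{2}$, a direct computation yields
\begin{align*}
  H_{\alpha}:=e^{\phi}He^{-\phi}=H+B_{\alpha},\qquad B_{\alpha}=2\,\nabla\phi\cdot\nabla+\big(\Delta\phi-|\nabla\phi|^{2}\big),
\end{align*}
where $B_{\alpha}$ is a first-order operator with coefficients of size $O(\alpha)$ in $L^{\infty}$. As multiplication by $e^{\phi}$ is an isometry from $L^{2}(e^{\alpha\japx})$ to $L^{2}$, the weighted resolvent bound is equivalent to a bound on $(z-H_{\alpha})^{-1}$ on $L^{2}$, which I would get from the factorization $z-H_{\alpha}=(z-H)(I-R(z)B_{\alpha})$ and a Neumann series: once $\|R(z)B_{\alpha}\|_{L^{2}\to L^{2}}\le\tfrac12$ one has $\|(z-H_{\alpha})^{-1}\|_{L^{2}\to L^{2}}\le 2/d(z,\sigma(H))$.

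The main obstacle is precisely the bound $\|R(z)B_{\alpha}\|\le\tfrac12$, since $B_{\alpha}$ contains the first-order drift $2\nabla\phi\cdot\nabla$; its bounded part contributes only $\lesssim\alpha/d(z,\sigma(H))$ and is harmless. For the drift I would use $\|R(z)\nabla\|_{L^{2}\to L^{2}}=\|\nabla R(\bar z)\|_{L^{2}\to L^{2}}$ together with the quadratic-form identity $\|\nabla R(\bar z)f\|^{2}=\langle u,(H-W)u\rangle$, $u=R(\bar z)f$, in which $(\bar z-H)u=f$ bounds $\langle u,Hu\rangle$ by $\Re(z)\|u\|^{2}+\|u\|\|f\|$. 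This produces $\|\nabla R(\bar z)\|\lesssim (1+|z|)^{1/2}/d(z,\sigma(H))$ and hence $\|R(z)B_{\alpha}\|\lesssim\alpha(1+|z|)^{1/2}/d(z,\sigma(H))$, so the Neumann series converges as soon as $\alpha\le c\,d(z,\sigma(H))$, the $(1+|z|)^{1/2}$ factor being inert in the bounded range of $z$ relevant here. This quadratic-form control of the first-order term, which dictates the admissible exponential rate, is the crux of the whole lemma.

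Finally I would upgrade the weighted $L^{2}$ bound to $H^{2}(e^{\alpha\japx})$. Writing $v=(z-H_{\alpha})^{-1}g$ with $g=e^{\phi}f$, so that $\|R(z)f\|_{H^{2}(e^{\alpha\japx})}=\|v\|_{H^{2}}$ and $\|f\|_{L^{2}(e^{\alpha\japx})}=\|g\|_{L^{2}}$, the equation $(z-H_{\alpha})v=g$ rearranges to $\Delta v=g-zv+B_{\alpha}v+Wv$. I would estimate $\|B_{\alpha}v\|\lesssim\alpha\|\nabla v\|+\alpha\|v\|$, control $\|\nabla v\|$ by the interpolation inequality $\|\nabla v\|\le\epsilon\|v\|_{H^{2}}+C_{\epsilon}\|v\|$, and absorb the resulting $\epsilon\|\Delta v\|$ into the left-hand side through $\|v\|_{H^{2}}\lesssim\|v\|_{L^{2}}+\|\Delta v\|_{L^{2}}$. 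Combined with $\|v\|_{L^{2}}\le 2\|g\|_{L^{2}}/d(z,\sigma(H))$ this yields $\|\Delta v\|\lesssim(1+|z|)\big(1+1/d(z,\sigma(H))\big)\|g\|$, which is the claimed weighted $H^{2}$ bound.
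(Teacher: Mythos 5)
Your proposal follows the same core strategy as the paper's proof: establish the unweighted $L^{2}\to H^{2}$ bound first (the paper simply cites it as classical; your elliptic-regularity derivation of it is correct), then obtain the weighted bound by the Combes--Thomas conjugation $H_{\alpha}=e^{\alpha\japx}He^{-\alpha\japx}=H+B_{\alpha}$ and a Neumann series treating $z-H_{\alpha}$ as a perturbation of $z-H$. Two execution differences are worth recording. First, to make the perturbation small, the paper writes $B_{\alpha}R(z)=\bigl[B_{\alpha}(1-\Delta)^{-1}\bigr]\bigl[(1-\Delta)R(z)\bigr]$ and bounds the second factor by the first inequality of the lemma, whereas you control the drift through the quadratic-form identity $\|\nabla R(\bar z)f\|^{2}=\langle u,(H-W)u\rangle$; your route gives the sharper factor $(1+|z|)^{1/2}$ where the paper's composition gives $(1+|z|)$, but you should say explicitly that $R(z)\,\nabla\phi\cdot\nabla$ has the multiplication operator sandwiched between $R(z)$ and $\nabla$, so a commutator (bounded, with $O(\alpha)$ norm) must be peeled off before $\|R(z)\nabla\|$ can be invoked. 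Second, the paper inverts on the other side, $(z-H_{\alpha})^{-1}=R(z)\bigl(1+\alpha B_{\alpha}R(z)\bigr)^{-1}$, so that $R(z)$ acts last and the $L^{2}\to H^{2}$ mapping property is inherited at once; your factorization $\bigl(1-R(z)B_{\alpha}\bigr)^{-1}R(z)$ puts $R(z)$ first, which is exactly why you need the final elliptic-regularity/absorption step. That step is correct, but flipping the factorization makes it unnecessary.

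One point should be stated honestly rather than dismissed as ``inert in the bounded range of $z$ relevant here'': the lemma quantifies over \emph{all} $z\notin\sigma(H)$ with a uniform $c$, and your Neumann series only converges for $\alpha\lesssim d(z,\sigma(H))/(1+|z|)^{1/2}$ (moreover, for $d(z,\sigma(H))$ large, your $O(\alpha)$ bookkeeping for $B_{\alpha}$ requires $\alpha\le 1$, since $|\nabla\phi|^{2}=\alpha^{2}|\nabla\japx|^{2}$). So what you actually prove is the statement with $\alpha_{z}=c\min\bigl(1,d(z,\sigma(H))\bigr)/(1+|z|)^{1/2}$. You should know, however, that the paper's own proof has the identical defect in a worse form: its claim that $\|B_{\alpha}R(z)\|\le C/d(z,\sigma(H))$ silently discards a $(1+|z|)$ factor and is false as stated (take $W=0$ and $z=E+i$ with $E$ large: then $\|\nabla R(z)\|\sim\sqrt{E}$ while $d(z,\sigma(H))=1$). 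Since the lemma is only ever applied with $z$ in compact sets and $\alpha$ small, nothing downstream in the paper is affected; on this point your argument is no weaker than the paper's, and your explicit tracking of the $(1+|z|)^{1/2}$ factor is in fact the more careful of the two.
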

\begin{proof}
  The first inequality is classical (see for instance
  \cite{levitt2020screening} Lemma 3.6).

  The second is a (non-sharp) Combes-Thomas estimate, which we prove for
  completeness here. Denote by
  \begin{align}
    \label{eq:Halpha}
      H_{\alpha} := e^{\alpha \langle x \rangle} (-\Delta+W) e^{-\alpha \langle x \rangle} = (-\Delta+W) + \underbrace{-2\alpha \nabla\langle x \rangle \cdot \nabla + \alpha^{2} \Delta(\langle x \rangle)}_{\alpha B_{\alpha}}.
    \end{align}
    Let $R(z) = (z-(-\Delta+W))^{-1}$. We have that
    \begin{align*}
      B_{\alpha} R(z)  =(-2 \nabla\langle x \rangle \cdot \nabla + \alpha \Delta(\langle x \rangle))  (1-\Delta)^{-1} (1-\Delta)R(z)
    \end{align*}
    is bounded as an operator on $L^{2}(\R^{d})$ by $C/d(z, \sigma(H))$,
    for all $\alpha \le 1$, for some $C > 0$. It follows that, for $\alpha \le d(z,\sigma(H))/(2C)$
    \begin{align*}
      (z-H_{\alpha})^{-1} = R(z)(1 + \alpha B_{\alpha} R(z))^{-1}
    \end{align*}
    is bounded from $L^{2}(\R^{d})$ to $H^{2}(\R^{d})$ with norm smaller
    than $\frac {C'}{d(z, \sigma(H))}$ for some $C' > 0$. Then, for
    all $\psi \in L^{2}(e^{\alpha \japx})$,
    \begin{align*}
      \|R(z) \psi\|_{H^{2}(e^{\alpha \japx})} = \|(z-H_{\alpha})^{-1} e^{\alpha \japx} \psi\|_{H^{2}(\R^{d})} \le \frac {C'}{d(z, \sigma(H))} \|\psi\|_{L^{2}(e^{\alpha \japx})}
    \end{align*}

  \end{proof}
\section*{Acknowledgments}
Discussions with Sören Behr, Luigi Genovese and Sonia Fliss are
gratefully acknowledged. This project has received funding from the
European Research Council (ERC) under the European Union’s Horizon
2020 research and innovation programme (grant agreement No 810367).

\printbibliography

\medskip

\noindent
$*$ Faculty of Mathematics, Technische Universität München, Germany
(dupuy@ma.tum.de)\\
$\dagger$ Inria Paris and Universit\'e Paris-Est, CERMICS, \'Ecole des Ponts ParisTech, Marne-la-Vall\'ee, France
(antoine.levitt@inria.fr)\\

\end{document}